\documentclass{amsart}

\usepackage[utf8]{inputenc}
\usepackage[english]{babel}
\usepackage{mathtools} 
\usepackage{xcolor,booktabs} 
\usepackage{ragged2e} 
\usepackage{amsmath,amssymb,mathrsfs,amsthm}
\usepackage{braket} 
\usepackage{hyperref}
\usepackage{comment}
\usepackage{csquotes}
\usepackage{tikz-cd} 
\usepackage{faktor} 
\usepackage{enumitem}
\usepackage[colorinlistoftodos,
	backgroundcolor=yellow!20!white,
	prependcaption,
	textsize=scriptsize
	]
	{todonotes} 
\usepackage{xargs}
\newcommandx{\fz}[2][1=]{\todo[inline,linecolor=blue,backgroundcolor=blue!25,bordercolor=blue,#1, author = FABRIZIO]{#2}} 
\newcommandx{\fm}[2][1=]{\todo[inline,linecolor=purple,backgroundcolor=purple!25,bordercolor=blue,#1, author = FELIX]{#2}} 
\newcommandx{\jdj}[2][1=]{\todo[inline,linecolor=red,backgroundcolor=red!25,bordercolor=red,#1, author = JEAN-DAVID]{#2}} 

\usepackage[backend=biber,style=alphabetic,sorting=ynt]{biblatex}
\theoremstyle{definition}
\newtheorem{definition}{Definition}[section]
\newtheorem{remark}[definition]{Remark}
\newtheorem{example}[definition]{Example}
\theoremstyle{plain}
\newtheorem{theorem}[definition]{Theorem}
\newtheorem*{theorem*}{Theorem}
\newtheorem{lemma}[definition]{Lemma}
\newtheorem{proposition}[definition]{Proposition}
\newtheorem*{proposition*}{Proposition}

\newcommand{\K}{\mathbb{K}}
\newcommand{\R}{\mathbb{R}}
\newcommand{\N}{\mathbb{N}}
\newcommand{\C}{\mathbb{C}}
\newcommand{\norm}[1]{\left\lVert #1\right\rVert} 
\newcommand{\id}{\operatorname{id}}
\newcommand{\Meas}{\mathbf{Meas}}
\newcommand{\opMeas}{\mathbf{Meas}^\mathrm{op}}
\newcommand{\embMeas}{\mathbf{Meas}_\mathrm{emb}}
\newcommand{\Vect}{\mathbf{Vect}_{\mathbb{K}}}

\newcommand{\NVect}{\mathbf{NVect}_{\mathbb{K}}}
\newcommand{\NVECT}{\mathbf{NVECT}_{\mathbb{K}}}
\newcommand{\NVembp}{\mathbb{NV}_{\mathrm{emb}}^p}
\newcommand{\NVp}{\mathbb{NV}^p}
\newcommand{\Ban}{\mathbf{Ban}_{\mathbb{K}}}
\newcommand{\BAN}{\mathbf{BAN}_{\mathbb{K}}}
\newcommand{\Bembp}{\mathbb{B}_{\mathrm{emb}}^p}
\newcommand{\Bp}{\mathbb{B}^p}

\newcommand{\proj}{\mathrm{proj}_{2}}
\newcommand{\Bemb}{\mathbb{B}_{\mathrm{emb}}^1}
\newcommand{\B}{\mathbb{B}^1}

\begin{document}

\title[Bochner integral]{A universal characterisation of the Bochner integral}

\author{Jean\--David Jacques}
\address{Institut f\"ur Mathematik, Universit\"at Potsdam}
\email{jean-david.jacques@uni-potsdam.de}
\author{Felix Medwed}
\address{Institut f\"ur Mathematik, Universit\"at Potsdam}
\email{felix.medwed@uni-potsdam.de}
\author{Fabrizio Zanello}
\address{Institut f\"ur Mathematik, Universit\"at Potsdam}
\email{fabrizio.zanello@uni-potsdam.de}

\begin{abstract}
    We provide a universal characterisation of the Banach spaces of equivalence classes of $L^p$ Bochner integrable functions, under equality almost everywhere, as initial objects in appropriate categories of bifunctors, for all $p \in [1,\infty)$.
    As a by-product of our construction, we also obtain a unique characterisation of the Bochner integral. 
\end{abstract}

\maketitle

\tableofcontents

\section{Introduction}

    The Bochner integral is a fundamental tool in functional analysis and operator theory, with a wide range of applications in harmonic analysis and probability. \\
    In our approach, we build on the work of Tom Leinster \cite{Leinster2023}, who provided a universal characterisation of the Banach spaces of $L^p$ Lebesgue integrable functions with values in $\K = \R,\C$ as initial objects of certain categories. \\
    The main challenge consists of generalising the construction developed there to the case of functions taking values in multi-dimensional vector spaces. 
    The naïve idea would be to replace $\K$ by some $\K$-vector space $V$ in Leinster's construction, but this strategy fails for at least three reasons.
    In the first place, because for generic vector spaces there is no canonical choice of a basis, in the second place, because the choosing a basis is not a functorially well-defined operation and, in the third place, because using indicator functions with values in the elements of a chosen basis as a genereting set yields a norm on the space of simple functions which depends on the choice of the basis. \\ 
    These issues require a more elaborate categorical framework and an accurate choice of additional structure to impose on the objects under consideration.

    In addition, a further motivation to consider the Bochner integral, comes from our interest in a universal characterisation of the signature of paths with values in Banach spaces (see \cite{Chen1957} for the original article and \cite{Chevyrev/Kormilitzin2016} for a recent overview) to which we plan to devote a future project.

    \subsection{The Bochner integral}

        The central idea at the base of Bochner's original work \cite{Bochner1933} is simple but fundamental: scalar integration is extended to Banach-space-valued functions by approximation with simple functions, with convergence understood in norm. 
        The resulting integration theory is the natural extension of the Lebesgue integral and retains most of its structural properties while providing a natural framework for integration in infinite-dimensional spaces.

        Consider a measure space $(X,\Sigma,\mu)$ and a Banach space $V$ over the field $\K = \R,\C$. 
        A function $f \colon X \rightarrow V$ is called \emph{strongly measurable} if it is almost everywhere the pointwise limit of $V$-valued simple functions (see \cite{Cohn2013} for more details). 
        The function $f$ is Bochner integrable if it is strongly measurable and satisfies
        \begin{equation*}
            \int_X \norm{f(x)}_V d\mu(x) < \infty.
        \end{equation*}
        The space of Bochner integrable functions defined on $X$, with values in $V$, is denoted by $L^1(X,V)$. \\
        In order to define the Bochner integral of an element $f \in L^1(X,V)$, one proceeds as follows.
        First, suppose that $g \colon X \rightarrow V$ is a simple and Bochner integrable function. 
        Let $v_1,\dots,v_n \in V$ be the non-zero values of $g$, and suppose that these values are attained on measurable subsets $A_1,\dots,A_n \in \Sigma$. 
        Then, integrability implies that each $A_k$ has finite measure with respect to $\mu$ and it is possible to define        
        \begin{equation*}
            \int_X g(x) d\mu(x) = \sum_{k=1}^n \mu(A_k) v_k.
        \end{equation*}
        As a direct consequence of the definition, we have that
        \begin{equation*}
            \norm{\int_X g(x) d\mu(x)}_V \leq \int_X \norm{g(x)}_V d\mu(x)
        \end{equation*}
        Now suppose $f$ is an arbitrary Bochner integrable function.
        We can choose a sequence $(g_n)_{n \in \N}$ of simple Bochner integrable functions such that $g_n(x) \xrightarrow[n \rightarrow \infty]{} f(x)$ for almost every $x \in X$ and such that the $\K$-valued function $x \mapsto \sup_{n \in \N}\norm{g_n(x)}_V$ is integrable.
        The dominated convergence theorem for $\K$-valued functions implies that $\lim_{n \rightarrow \infty}\int_X \norm{f(x) - g_n(x)}_V d\mu(x) = 0$.
        Thus $\left( \int_X g_n(x) d\mu(x) \right)_{n \in \N}$ is a Cauchy sequence in $V$ and hence it converges.
        The Bochner integral $\int_X f(x) d\mu(x)$ is defined as the limit of this sequence and it is easy to check that it does not depend on the choice of the approximating sequence $(g_n)_{n \in \N}$.

        One of the basic properties of the Bochner integral (see \cite{Hytoenen/van_Neerven/Veraar/Weis2016} for a systematic exposition) is that, for any $f \in L^1(X,V)$ and for any bounded linear operator $\varphi \colon V \rightarrow W$, we have the equality
        \begin{equation}
        \label{eq:linear forms}
            \int_X \left(\varphi \circ f \right)(x) d\mu(x) = \varphi \left( \int_X f(x) d\mu(x) \right).
        \end{equation}        
        In particular, the last equality remains valid in $\K$ for all continuous linear functionals $\varphi \in V^*$, which, using a well-known corollary of the Hahn-Banach theorem, implies that the Bochner integral of any $f \in L^1(X,V)$ is uniquely characterised by the values of the Lebesgue integrals $\int_X \left(\varphi \circ f \right)(x) d\mu(x)\in \K$ for $\varphi$ ranging in $V^*$.
        Conversely, given a general Banach space $V$ and a linear functional $F \in V^{**}$, the existence of an element $I \in V$, such that $F(\varphi) = \varphi(I)$ is not guaranteed, since in general the canonical injection $V\hookrightarrow V^{**}$ is not surjective. 
        This implies, in particular, that for any strongly measurable $f \colon X \rightarrow V$, the bounded linear functional $F \colon \varphi \mapsto \int_X \left(\varphi \circ f \right)(x) d\mu(x)$ does not \textit{a priori} guarantee the existence of an element $I_f\in V$ such that $F(\varphi)=\varphi(I_f)$, which would correspond to the Bochner integral of the funtion $f$.

        This last observation justifies the relevance of Bochner's construction and leads to an interesting classification of Banach spaces, based on whether they satisty the $\mu$-Radon-Nikodym property or not. 
        The latter states that, for any measure $\nu$ on $X$, such that $\nu \ll \mu$, there exists a Bochner integrable funtion $f \in L^1(X,\R)$ such that, for every measurable set $A \in \Sigma$, one has
        \begin{equation*}
            \nu(A) = \int_A f(x) d\mu(x).
        \end{equation*}
        Further developments within the theory of vector measures and, in particular, from the point of view of Radon-Nikodym properties for the Bochner integral and the geometry of Banach spaces can be found in \cite{Diestel/Uhl1977}.

        The applications of the Bochner integral are correspondingly broad. 
        In functional analysis, it provides the natural construction of the spaces $L^p(X,V)$ and is indispensable in the study of vector-valued operators and tensor-product representations (see \cite{Raymond2002} for a comprehensive treatment). 
        In differential equations, Banach-space-valued integration permits the formulation of integral and mild solutions of evolution equations and is closely connected with the theory of strongly continuous semigroups \cite{Hille/Phillips1957}. 
        In harmonic analysis, the Bochner integral provides a framework for extending Fourier-analytic methods to Banach and Hilbert spaces (see \cite{Hytoenen/van_Neerven/Veraar/Weis2016}).
        In probability theory, Banach-space-valued random variables have Bochner expectations, making the Bochner integral fundamental for the study of random elements in infinite-dimensional spaces (see, for example, \cite{Chatterji1968}). 
        This also permits to derive new notions of conditional expectations and consequently to study martingales with values in Banach spaces as done, for example, in the recent work \cite{steinwart2024conditioning}.

        We also mention, for the reader with practical views, the interesting fact that in classical examples, when the Banach space $(V,\norm{\cdot})$ is a function space, say $C([0,1],\R)$, if, for a given $t \in[0,1]$, the Dirac operator $\delta_t \colon V \rightarrow \R \, \colon f \mapsto f_t$ is continuous with respect to the norm $\norm{\cdot}$, then equality \eqref{eq:linear forms}, with $\varphi = \delta_t$ gives:
        \begin{equation*}
            \left( \int_X f(x) d\mu(x) \right)(t) = \int_X  f_t (x) d\mu(x).
        \end{equation*}
        Again, in the case of $C([0,1],\R)$, the continuity of $\delta_t$ for all $t \in [0,1]$ is ensured by the sufficient condition that $\norm{\cdot}_\infty \leq \norm{\cdot}$.
        Therefore, in probability theory in the study of continuous semi-martingales, the Bochner integral can also lead to different notions of mean curve, depending on the norm considered on $C([0,1],\R)$.
        
        We also mention an extension of the Bochner integral, which was later provided in the work by Pettis \cite{Pettis1938}, based on the notion of duality for Banach spaces. 
        In modern terminology, a function $f \colon X \rightarrow V$ is Pettis integrable when the $\K$-valued function $\varphi(f)$ is integrable for every $\varphi \in V^*$, and there is an element $I_f \in V$ such that
        \begin{equation*}
            \varphi(I_f) = \int_X \varphi\left( f(x) \right) d\mu(x).
        \end{equation*}
        The Pettis integral therefore replaces the norm approximation central to Bochner integration by a dual, weak formulation.

    \subsection{Strategy and outline of the paper}

        We take Leinster's remarkably clear and subtle paper \cite{Leinster2023} as our starting point and try to retain its structure as much as possible.
        There, the Lebesgue integral of $\K$-valued functions, where $\K = \R, \C$, defined on measure spaces with finite total measure is characterised in three steps:
        \begin{enumerate}
            \item universal characterisation of the vector spaces of simple functions;
            \item universal characterisation of the normed vector spaces of equivalence classes of simple functions, under equality almost everywhere;
            \item completion of the normed vector spaces of equivalence classes of simple functions to obtain the Banach spaces of equivalence classes of Lebesgue integrable functions, under equality almost everywhere.
        \end{enumerate}
        These steps correspond to the introduction of three increasingly specific categories of functors (with a little additional structure) from the category of finite measure spaces and embeddings $\embMeas$ (or, alternatively, the opposite of the category of finite measure spaces and measure-preserving partial maps $\opMeas$), respectively, to the categories $\Vect$, of vector spaces and linear maps, $\NVect$, of normed vector spaces and linear contractive maps, and $\Ban$, of Banach spaces and linear contractive maps. \\
        In this framework, the main result of the first step is that the functor associating, to each finite measure space, the vector space of $\K$-valued simple functions on it is the initial object of the respective category. 
        The main result of the second step crucially uses the fact that the category of the second step can be regarded as a subcategory of the category of the first step to show that the functor associating, to each finite measure space, the normed vector space of equivalence classes of $\K$-valued simple functions on it, under equality almost everywhere, is the initial object in the respective category. 
        Finally, the third step uses the fact that the forgetful functor from the category of the third step to the category of the second step admits a left adjoint given by the completion functor to compute the initial object of the category of the third step as the image, via the completion functor, of the initial object of the category of the second step.
        This precisely yields the functor of Lebesgue integrable functions as the completion of the functor of equivalence classes of simple functions. 
        
        Since our ultimate aim is to characterise the spaces of integrable functions taking values in any Banach space, and not only in $\K$, we are naturally led to introduce an additional dependence in the functors considered in \cite{Leinster2023}, accounting for the choice of codomain of the functions. \\
        The natural way to adapt Leinster's strategy to the case of vector-valued functions would then be to consider three categories of bifunctors (with a little additional structure) -- respectively, an appropriate subcategory of $\Vect^{\embMeas \times \Vect}$ for the analogous version of the first step, of $\NVect^{\embMeas \times \NVect}$ for the analogous version of the second step and of $\Ban^{\embMeas \times \Ban}$ for the analogous version of the third step -- and to try to generalise the corresponding results.
        However, this plan would immediately collapse due to the fact that the category of bifunctors of the second step cannot be regarded as a subcategory of the category of the first step. 
        In particular, there is no canonical way to extend a bifunctor defined on $\embMeas \times \NVect$ to a bifunctor defined on $\embMeas \times \Vect$, because there is no canonical way to assign a norm to any vector space. \\
        Fortunately, the fact that the categories $\NVect$ and $\Ban$ can be related by canonical adjoint functors implies that also the categories of bifunctors of the second and third steps can be related by canonical adjoint functors.
        This is, indeed, enough to be able to generalise Leinster's construction in two steps:
        \begin{enumerate}
            \item universal characterisation of the normed vector spaces of equivalence classes of simple functions with values in normed vector spaces, under equality almost everywhere, see Section \ref{sec:simple functions with values in normed vector spaces};
            \item completion of the normed vector spaces of equivalence classes of simple functions with values in normed vector spaces to the Banach spaces of equivalence classes of $L^p$ Bochner integrable functions, under equality almost everywhere, for any $p \in [1,\infty)$, see Section \ref{sec:completion}.
        \end{enumerate}

        More specifically, the outline of the paper is as follows.
        In Section \ref{sec:basic definitions examples and remarks 1}, we recall basic definitions and useful remarks from \cite{Leinster2023} and then introduce the categories of bifunctors (with a little additional structure) $\NVembp \subset \NVect^{\embMeas \times \NVect}$ (see Definition \ref{def:NV_embp}) and $\NVp \subset \NVect^{\opMeas \times \NVect}$ (see Definition \ref{def:NVp}).
        The additional structure with which we endow the categories $\NVembp$ and $\NVp$ is the crucial ingredient that allows us to adapt in a natural way the delicate constructions and arguments used in \cite{Leinster2023} to our setting. 
        Roughly speaking, it consists, on the one hand, of the fact that objects in these categories are pairs $(F,c)$ where $F$ is a bifunctor and $c$ denotes the assignment, to every object of the domain category of the bifunctor $F$, of a bounded linear map.
        On the other hand, the additional structure that we impose is represented by the requirement that some axioms, involving both the bifunctors and the assignments, must be satisfied. \\ 
        Besides introducing the categories $\NVembp$ and $\NVp$, for all $p \in [1,\infty]$, we also discuss in detail the features of the two fundamental examples (denoted, by abuse of notation, with the same symbol) of objects $(S^p,\sigma) \in \NVembp$ and $(S^p,\sigma) \in \NVp$ (see, respectively, Example \ref{ex:simple functions with values in normed vector spaces 1} and Example \ref{ex:simple functions with values in normed vector spaces 2}).
        Here, $S^p$ denotes the bifunctor associating, to any measure space $X$ and to any normed vector space $V$, the normed vector space $S^p(X,V)$ of equivalence classes of simple functions on $X$ with values in $V$, under equality almost everywher, endowed with the $L^p$ norm.
        In addition, $\sigma$ assigns, to any measure space $X$ and to any normed vector space $V$, a bounded linear map $\sigma_{(X,V)} \colon V \rightarrow S^p(X,V)$ defined, for any element $v \in V$, as the constant function on $X$ with value $v$.

        In Section \ref{sec:preparatory lemmas and main result}, we first prove four technical lemmas -- Lemma \ref{lemma:Beck-Chevalley conditions in the normed case}, Lemma \ref{lemma:functors and pairwise disjoint embeddings in the normed case}, Lemma \ref{lemma:functors and measure-preserving maps in the normed case} and Lemma \ref{lemma:compatibility of c with embeddings in the normed case} -- which we then use to obtain the proof of the first main result of this work.
        \begin{theorem*}[\ref{thm:universal properties of simple functions with values in normed vector spaces}]
            Let $p \in [1,\infty]$. 
            The bifunctors of the normed vector spaces of equivalence classes of simple functions with values in normed vector spaces, under equality almost everywhere, have the following universal properties:
            \begin{itemize}
                \item[(i)] $(S^p,\sigma)$ is the initial object of the category $\NVembp$;
                \item[(ii)] $(S^p,\sigma)$ is the initial object of the category $\NVp$.
            \end{itemize}
        \end{theorem*}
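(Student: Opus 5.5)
Given an object $(F,c)$ of $\NVembp$, the plan is to construct a morphism $\alpha\colon (S^p,\sigma)\to (F,c)$ and show it is unique; part (ii) is then handled by the same argument transported along the relationship between $\opMeas$ and $\embMeas$.

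\textbf{Uniqueness.} Every class in $S^p(X,V)$ has a representative $\sum_{k=1}^n v_k \mathbf{1}_{A_k}$ with $A_1,\dots,A_n$ pairwise disjoint measurable sets of positive measure. For each $k$ the inclusion $i_k\colon A_k\hookrightarrow X$, with the restricted measure, is an embedding, and the simple function $v_k\mathbf{1}_{A_k}$ equals
\[
S^p(i_k,V)\bigl(\sigma_{(A_k,V)}(v_k)\bigr),
\]
the pushforward of the constant function $v_k$ on $A_k$. A morphism $\alpha$ must
\begin{itemize}
\item[(a)] be natural in both variables, and
\item[(b)] satisfy $\alpha_{(A,V)}\circ\sigma_{(A,V)}=c_{(A,V)}$.
\end{itemize}
Together these force
\[
\alpha_{(X,V)}\Bigl(\sum_k v_k\mathbf{1}_{A_k}\Bigr)=\sum_k F(i_k,V)\bigl(c_{(A_k,V)}(v_k)\bigr).
\]
So $\alpha$ is determined, and we take this formula as the definition.

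\textbf{Well-definedness.} The main work is to show the formula does not depend on the representative.
\begin{itemize}
\item Any two disjoint representations have a common refinement. It therefore suffices to check invariance when a single $A_k$ is split into disjoint pieces $B_1,\dots,B_m$.
\item We need $c_{(A,V)}(v)=\sum_j F(\iota_j,V)\bigl(c_{(B_j,V)}(v)\bigr)$. This should be exactly the content of Lemma \ref{lemma:compatibility of c with embeddings in the normed case}, combined with Lemma \ref{lemma:functors and pairwise disjoint embeddings in the normed case} on pairwise disjoint embeddings.
\item Independence from null sets (equality almost everywhere) should follow from the axiom that $c$ vanishes or behaves trivially on null spaces. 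Lemma \ref{lemma:functors and measure-preserving maps in the normed case} should also let us identify isomorphic measure spaces.
\end{itemize}
Linearity is then immediate on common refinements.

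\textbf{Contractivity.} Using the $L^p$-type axiom on $F$, the decomposition along disjoint embeddings should satisfy
\[
\Bigl\|\sum_k F(i_k)(x_k)\Bigr\|^p\le\sum_k\|x_k\|^p .
\]
The axiom on $c$ should give $\|c_{(A,V)}(v)\|\le \mu(A)^{1/p}\|v\|$. Combining these yields
\[
\|\alpha(f)\|^p\le\sum_k\mu(A_k)\|v_k\|^p=\|f\|_p^p ,
\]
with the usual modification of the sum to a maximum when $p=\infty$.

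\textbf{Naturality.}
\begin{itemize}
\item In $X$: for an embedding $j\colon X\to Y$, the map $j$ carries $A_k$ to $j(A_k)$, and $j\circ i_k$ is the corresponding inclusion, so functoriality gives naturality.
\item In $V$: for a contraction $\varphi\colon V\to W$, naturality reduces to naturality of $c$ in $V$, which is part of the structure.
\item Condition (b) holds by construction, using the trivial partition of $X$.
\end{itemize}

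\textbf{Part (ii).} For $\opMeas$ the same formula is used. Lemma \ref{lemma:Beck-Chevalley conditions in the normed case} is what lets the contravariant action along measure-preserving partial maps be expressed through embeddings, so that naturality with respect to partial maps reduces to the embedding case.

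The main obstacle is the refinement invariance, i.e.\ the additivity of $c$ over partitions. Everything else is formal once that identity is available from the preparatory lemmas.
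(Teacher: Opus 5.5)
Your part (i) is essentially the paper's argument: the formula forced by naturality together with $\alpha\circ\sigma=c$, well-definedness via Lemma \ref{lemma:functors and pairwise disjoint embeddings in the normed case} and the vanishing of $c$ on null subspaces (Lemma \ref{lemma:compatibility of c with embeddings in the normed case}), contractivity via repeated use of \ref{ax:norm inequality wrt complementary embeddings} and \ref{ax:boundedness of c}, and naturality via Remark \ref{rem:morphisms and functors in product categories in the normed case} and \ref{ax:compatibility with linear contractive maps}. One correction: Lemma \ref{lemma:functors and measure-preserving maps in the normed case} is not available in part (i), since it needs $(F,c)\in\NVp$. You do need to identify isomorphic spaces, because in the naturality step $j\circ i_k$ agrees with the inclusion $j(A_k)\hookrightarrow Y$ only up to the isomorphism $A_k\cong j(A_k)$. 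That identification comes instead from axiom \ref{ax:compatibility with complementary embeddings in the normed case} applied to the complementary pair $A_k\to j(A_k)\leftarrow\emptyset$, together with $c_{(\emptyset,V)}=0$.

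The genuine gap is in part (ii). Lemma \ref{lemma:Beck-Chevalley conditions in the normed case} holds for an arbitrary bifunctor and involves no $c$ at all. It only moves $m^{\mathrm{op}}$ past the inclusion of $B$, and what is left, $m'\colon m^{-1}(B)\to B$, is again a measure-preserving map, not an embedding. So naturality along $(m^{\mathrm{op}},T)$ does not reduce to the embedding case. Testing on $v\chi_B$, which $S^p_{(m^{\mathrm{op}},T)}$ sends to $T(v)\chi_{m^{-1}(B)}$, you must prove $F_{(m^{\mathrm{op}},T)}\circ c^{(X,V)}_{(B,V)}=c^{(Y,W)}_{(m^{-1}(B),W)}\circ T$. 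After Beck--Chevalley and the product decomposition, the missing input is $F_{((m')^{\mathrm{op}},\id_V)}\circ c_{(B,V)}=c_{(m^{-1}(B),V)}$. That is exactly axiom \ref{ax:compatibility with measure-preserving maps in the normed case} (followed by \ref{ax:compatibility with linear contractive maps 1}), and it is the content of Lemma \ref{lemma:functors and measure-preserving maps in the normed case}; your proposal never uses \ref{ax:compatibility with measure-preserving maps in the normed case}.

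This step cannot be bypassed. Take $p=1$ and the bifunctor $(X,V)\mapsto V$, $((D,q)^{\mathrm{op}},T)\mapsto T$, with $c_{(X,V)}=\mu_X(X_{\mathrm{at}})\,\id_V$, where $X_{\mathrm{at}}$ is the atomic part of $X$. It satisfies \ref{ax:compatibility with linear contractive maps 1}, \ref{ax:compatibility with complementary embeddings in the normed case 1}, \ref{ax:boundedness of c 1} and \ref{ax:norm inequality wrt complementary embeddings 1}. Yet the morphism built in part (i) sends $v\chi_{\{\ast\}}\mapsto v$ (with $\{\ast\}$ of mass $1$) and $v\chi_{[0,1]}\mapsto 0$ (Lebesgue measure). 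So it is not natural along the measure-preserving constant map $[0,1]\to\{\ast\}$.

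The repair is the paper's route. Obtain the morphism and its uniqueness from (i) via $\mathcal{U}^p$. Use Remark \ref{rem:structure of morphisms in Meas} to reduce naturality to embeddings (already done) and measure-preserving maps. Handle the latter with Lemma \ref{lemma:functors and measure-preserving maps in the normed case}.
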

        In Section \ref{sec:basic definitions examples and remarks 2}, we start by introducing the appropriate categories of bifunctors (with a little additional structure) $\Bembp \subset \Ban^{\embMeas \times \Ban}$ (see Definition \ref{def:Bemb}) and $\Bp \subset \Ban^{\opMeas \times \Ban}$ (see Definition \ref{def:Bp}), for all $p \in [1,\infty]$. 
        The additional structure with which we endow the categories $\Bembp$ and $\Bp$ is essentially the same as before. \\
        We also discuss in detail the features of the two fundamental examples (denoted again, by abuse of notation, with the same symbol) of objects $(L^p,\sigma) \in \Bembp$ and $(L^p,\sigma) \in \Bp$ (see, respectively, Example \ref{ex:Lpemb functor} and Example \ref{ex:Lp functor}).
        Here, roughly speaking, $L^p$ denotes the bifunctor associating, to any measure space $X$ and to any Banach space $V$, the Banach space $L^p(X,V)$ of equivalence classes of $L^p$ Bochner integrable functions on $X$ with values in $V$, under equality almost everywhere.
        In addition, $\sigma$ is defined in the same way as above. \\
        After this, and in preparation for the subsequent results, in Remark \ref{rem:on all the forgetful functors}, we point out the existence of some useful functors between the categories $\NVembp$, $\NVp$, $\Bembp$ and $\Bp$ and describe in detail their features.

        These observations constitute the starting point, in Section \ref{sec:main results}, for the proof of the second main result of this work.
        \begin{theorem*}[\ref{thm:universal properties of Bochner integrable functions}]
            Let $p \in [1,\infty)$. The bifunctors of the Banach spaces of equivalence classes of $L^p$ Bochner integrable functions, under equality almost everywhere, have the following universal properties:
            \begin{itemize}
                \item[(i)] $(L^p,\sigma)$ is the initial object of the category $\Bembp$;
                \item[(ii)] $(L^p,\sigma)$ is the initial object of the category $\Bp$.
            \end{itemize} 
        \end{theorem*}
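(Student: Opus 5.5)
The strategy follows the outline in the introduction. The idea is to transport the initiality of $(S^p,\sigma)$ from Theorem \ref{thm:universal properties of simple functions with values in normed vector spaces} along an adjunction between the normed and the Banach categories of bifunctors.

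\textbf{Step 1: the completion functor.} The first step builds, using the functors of Remark \ref{rem:on all the forgetful functors}, a completion functor $\mathcal{C}\colon \NVembp \to \Bembp$ (and likewise $\NVp \to \Bp$). It sends $(F,c)$ to the pair $(\widehat{F},\widehat{c})$, where
\begin{equation*}
\widehat{F}(X,V) = \widehat{F(X,V)}
\end{equation*}
is the completion of the normed space $F(X,V)$, extended to morphisms by the universal property of completion (contractions extend uniquely to contractions). The assignment $\widehat{c}_{(X,V)}$ is the composite of $c_{(X,V)}$ with the canonical map into the completion. Two points need checking:
\begin{itemize}
\item The axioms imposed on pairs $(F,c)$ survive completion. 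These are equalities of bounded linear maps on a dense subspace, together with norm conditions, so they pass to the completion by continuity.
\item Precomposition with the inclusion $\Ban \hookrightarrow \NVect$ must be handled. Objects of $\Bembp$ are defined only on $\embMeas\times\Ban$, so to compare with $\NVembp$ one needs a forgetful functor $U\colon \Bembp\to\NVembp$. I take $U(G,c)(X,V) = G(X,\widehat V)$, with $c$ restricted along $V\hookrightarrow \widehat V$. Equivalently, in the other direction one restricts $S^p$ to Banach inputs.
\end{itemize}
The key claim is that $\mathcal{C}$ is left adjoint to $U$. The unit is the canonical dense inclusion, and a morphism $(F,c)\to U(G,d)$ extends uniquely, by density and contractivity, to a morphism $\mathcal{C}(F,c)\to(G,d)$.

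\textbf{Step 2: formal consequences.} Left adjoints preserve initial objects, since they preserve all colimits and an initial object is the empty colimit. Hence $\mathcal{C}(S^p,\sigma)$ is initial in $\Bembp$, and in $\Bp$ respectively.

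\textbf{Step 3: identification with $(L^p,\sigma)$.} It remains to identify $\mathcal{C}(S^p,\sigma)$ with $(L^p,\sigma)$ up to isomorphism. For $p\in[1,\infty)$ and $V$ Banach, equivalence classes of integrable simple functions are dense in $L^p(X,V)$. This is where $p<\infty$ is used: for $p=\infty$, simple functions are not dense among strongly measurable bounded functions in the needed way. Density follows from the definition of strong measurability, truncating approximating simple functions by $2\norm{f}$ and applying dominated convergence to $\norm{f-g_n}^p$. Therefore the isometric inclusion $S^p(X,V)\hookrightarrow L^p(X,V)$ induces an isometric isomorphism
\begin{equation*}
\widehat{S^p(X,V)} \cong L^p(X,V).
\end{equation*}
One then checks that these isomorphisms are natural in $X$ (along embeddings, or along measure-preserving partial maps in the $\opMeas$ case) and in $V$, and that they carry $\widehat{\sigma}$ to $\sigma$. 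Both checks reduce to verification on simple functions, followed by continuity. Since an object isomorphic to an initial object is initial, this gives (i) and (ii).

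\textbf{Main obstacle.} I expect the hardest part to be the adjunction in Step 1, because of the mismatch of domains: $\NVect$ in the second variable versus $\Ban$. One must make sure that $U$ actually lands in $\NVembp$, i.e.\ satisfies the axioms for all normed $V$. One must also check that the counit-type extension respects the $c$-components. If the $\widehat V$ approach to defining $U$ fails, the fallback is to bypass adjunctions. Given $(G,d)\in\Bembp$, one restricts to a suitable object of $\NVembp$ and takes the unique morphism from $(S^p,\sigma)$ provided by Theorem \ref{thm:universal properties of simple functions with values in normed vector spaces}. This morphism is then extended by density to $L^p$, and uniqueness follows because any two morphisms agree on the dense subspace of simple functions.
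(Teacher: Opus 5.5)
Your proposal is correct and follows essentially the same route as the paper. Your completion functor and your $U(G,c)(X,V)=G(X,\widehat V)$ are exactly the paper's $\mathcal{C}^p_{\mathrm{emb}}F=\overline{\cdot}\circ F\circ(\id,\mathcal{I})$ and $\mathcal{V}^p_{\mathrm{emb}}G=\mathcal{I}\circ G\circ(\id,\overline{\cdot})$; you then transport initiality of $(S^p,\sigma)$ along the adjunction and identify $\mathcal{C}^p_{\mathrm{emb}}(S^p,\sigma)$ with $(L^p,\sigma)$ by density of simple functions for $p<\infty$, as the paper does. One small correction: for non-complete $V$ the unit at $(X,V)$ is the composite $k\circ F_{(\id_X,j_V)}$ (with $j_V\colon V\hookrightarrow\widehat V$ and $k\colon F(X,\widehat V)\hookrightarrow\widehat{F(X,\widehat V)}$), not just a dense inclusion, so the hom-set bijection also needs naturality along $(\id_X,j_V)$ besides density, though the adjunction itself holds (the paper gets it formally from $\overline{\cdot}\dashv\mathcal{I}$ by pre- and post-composition).
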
 
        In Remark \ref{rem:issue with the infinity case}, we motivate and show with a concrete example why the case $p = \infty$ cannot be covered by Theorem \ref{thm:universal properties of Bochner integrable functions} and finally, as a last step, we show how the universal characterisation of the Banach spaces of equivalence classes of $L^p$ Bochner integrable functions, under equality almost everywhere, naturally induces a unique characterisation of the Bochner integral. \\
        To this end, we fix $p = 1$ and introduce a distinguished object $(\proj,\tau)$ in the category $\Bemb$.
        We discuss its features in detail and then we give the proof of the third main result of this work.
        \begin{proposition*}[\ref{prop:uniqueness of the Bochner integral}]
            The unique morphism $(L^1,\sigma) \rightarrow (\proj,\tau)$ in the category $\Bemb$ is given, component-wise, by the Bochner integral:
            \begin{equation*}
                \int_{(X,V)} = \int_X \colon L^1(X,V) \rightarrow V,
            \end{equation*}
            for any object $(X,V) \in \embMeas \times \Ban$.
        \end{proposition*}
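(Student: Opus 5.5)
Since $(L^1,\sigma)$ is initial in $\Bemb$ by Theorem \ref{thm:universal properties of Bochner integrable functions}(i), there is exactly one morphism $(L^1,\sigma)\to(\proj,\tau)$. The plan is therefore not to construct that morphism but to check that the family of Bochner integrals $\int_{(X,V)}=\int_X\colon L^1(X,V)\to V$ is \emph{a} morphism $(L^1,\sigma)\to(\proj,\tau)$ in $\Bemb$; uniqueness then identifies it with the unique one. Here I take $\proj(X,V)=V$, with $\proj$ acting on an embedding $j\colon X\to Y$ as the identity of $V$ and on a contraction $\varphi\colon V\to W$ as $\varphi$. I take $\tau_{(X,V)}\colon V\to V$ to be multiplication by $\mu(X)$, so that $\tau$ is compatible with the integral of constants, and I assume that morphisms in $\Bemb$ are natural transformations that are componentwise linear contractions and intertwine the distinguished maps.

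\textbf{Step 1: well-definedness and contractivity.} Each component is linear and bounded, since $\norm{\int_X f\,d\mu}_V\le\int_X\norm{f}_V\,d\mu=\norm{f}_{L^1(X,V)}$. The integral respects equality almost everywhere, so it descends to equivalence classes. This step is standard from the construction recalled in the introduction.

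\textbf{Step 2: naturality in both variables.} Naturality in $V$ means that $\varphi\circ\int_X f=\int_X\varphi\circ f$ for every linear contraction $\varphi\colon V\to W$, which is exactly equation \eqref{eq:linear forms}. For an embedding $j\colon X\to Y$, the functor $L^1(j,V)$ is extension by zero, and $\proj(j,V)=\id_V$. So I need $\int_Y j_*f\,d\mu_Y=\int_X f\,d\mu_X$. I verify this first on simple functions, where it holds because $j$ is measure preserving onto its image ($\mu_Y(j(A))=\mu_X(A)$). It then extends to all of $L^1$ by density of simple functions and continuity of both sides.

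\textbf{Step 3: compatibility with the distinguished maps.} I need $\int_X\circ\,\sigma_{(X,V)}=\tau_{(X,V)}$. Indeed, $\int_X$ of the constant function $v$ equals $\mu(X)v$, by the simple-function formula.

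The only step with any content is identifying $(\proj,\tau)$ correctly and checking that the morphism axioms of $\Bemb$ are exactly the three properties above. If $\Bemb$ also imposes the paper's additional axioms on the $c$'s, those are requirements on objects, not on morphisms. Verifying that $(\proj,\tau)$ is an object is done where it is introduced, so it is assumed here. The analytic facts needed (the contraction estimate, linearity via \eqref{eq:linear forms}, and the behaviour under pushforward along embeddings) all reduce to simple functions plus a density argument.
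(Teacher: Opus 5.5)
Your proposal is correct and follows the paper's proof: you invoke initiality of $(L^1,\sigma)$ from Theorem~\ref{thm:universal properties of Bochner integrable functions}(i), then check contractivity via $\norm{\int_X f\,d\mu_X}_V\le\int_X\norm{f}_V\,d\mu_X$, naturality, and $\int_X\circ\,\sigma_{(X,V)}=\tau_{(X,V)}$. The only cosmetic difference is in the naturality step. You split it into the two variables and handle the embedding case via simple functions and density. The paper instead treats a general $(\iota,T)$ in one chain, using that embeddings preserve measures and that $T$ commutes with the Bochner integral.
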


    \subsection{Notation and conventions}
    
        For the reader's convenience, we fix here some notation and conventions regarding categories, functors and natural transformations which we shall use throughout the paper. \\
        Consider two categories $\mathbf{A}$ and $\mathbf{B}$ and a functor $F \colon \mathbf{A} \rightarrow \mathbf{B}$.
        The action of the functor $F$ on any object $X \in \mathbf{A}$ is denoted by $FX \in \mathbf{B}$.
        The action of the functor $F$ on any morphism $a \colon X \rightarrow Y$ in the category $\mathbf{A}$ yields a morphism denoted by $F_a \colon FX \rightarrow FY$ in the category $\mathbf{B}$. \\
        Given another functor $G \colon \mathbf{A} \rightarrow \mathbf{B}$, a natural transformation from $F$ to $G$ is denoted by $\varphi \colon F \Rightarrow G$ and its components determine morphisms $\varphi_X \colon FX \rightarrow GX$, for any object $X \in \mathbf{A}$, in the category $\mathbf{B}$. \\
        The opposite category of the category $\mathbf{A}$ is denoted by $\mathbf{A}^{\mathrm{op}}$.
        Objects in $\mathbf{A}^{\mathrm{op}}$ are denoted in the same way as objects in $\mathbf{A}$, while a morphism in $\mathbf{A}^{\mathrm{op}}$ is denoted by $m^{\mathrm{op}} \colon X \rightarrow Y$, where $m \colon Y \rightarrow X$ is the corresponding morphism in the category $\mathbf{A}$. \\
        Finally, we will also need to consider product categories.
        Objects in the product category $\mathbf{A} \times \mathbf{B}$ are pairs $(X,Z)$, where $X \in \mathbf{A}$ and $Z \in \mathbf{B}$.
        Morphisms are pairs $(a,b) \colon (X,Z) \rightarrow (X',Z')$, where $a \colon X \rightarrow X'$ is a morphism in $\mathbf{A}$, while $b \colon Z \rightarrow Z'$ is a morphism in $\mathbf{B}$.
        The composition of morphisms $(a,b) \colon (X,Z) \rightarrow (X',Z')$ and $(c,d) \colon (X',Z') \rightarrow (X'',Z'')$ in $\mathbf{A} \times \mathbf{B}$ is always intended component-wise, namely, $(c,d) \circ (a,b) = (c \circ a,d \circ b) \colon (X,Z) \rightarrow (X'',Z'')$.

\section{Simple functions with values in normed vector spaces}
\label{sec:simple functions with values in normed vector spaces}

    In this section, we provide a universal characterisation of the normed vector spaces of equivalence classes of simple functions with values in normed vector spaces, under equality almost everywhere.

    \subsection{Basic definitions, examples and remarks}
    \label{sec:basic definitions examples and remarks 1}

        We first recall some elementary notions from \cite{Leinster2023}.
        Then, we introduce the two categories $\NVembp$ and $\NVp$ in which the universal characterisation of the normed vector spaces of equivalence classes of simple functions with values in normed vector spaces, under equality almost everywhere, takes place.
        We discuss in full detail the fundamental examples of objects $(S^p,\sigma) \in \NVembp$ and $(S^p,\sigma) \in \NVp$, and we also include a number of useful remarks.
        Whenever possible, we adopt the same notation as in \cite{Leinster2023}, to improve the readability and facilitate the comparison.

        \begin{definition}
        \label{def:embedding}
            An \emph{embedding} of measure spaces $\iota \colon X \rightarrow Y$ is an injective map such that $A \subseteq X$ is measurable if and only if $\iota(A) \subseteq Y$ is measurable and such that $\mu_X(A) = \mu_Y \left( \iota(A) \right)$.
            We denote by $\embMeas$ the category whose objects are measure spaces with finite total measure and whose morphisms are embeddings.
        \end{definition}

        \begin{definition}
        \label{def:complementary embeddings}
            Two embeddings of measure spaces
            \begin{equation*}
                Y \xrightarrow{~~\alpha~~} X \xleftarrow{~~\beta~~} Z
            \end{equation*}
            are called \emph{complementary} if $\alpha(Y) \cap \beta(Z) = \emptyset$ and $\alpha(Y) \cup \beta(Z) = X$.
        \end{definition}

        \begin{definition}
            A \emph{measure-preserving partial map} $(D,p) \colon X \rightarrow Y$ is a measurable subset $D \subseteq X$ together with a measure-preserving map $p \colon D \rightarrow Y$, where $D$ is given the unique measure space structure such that the inclusion $D \hookrightarrow X$ is an embedding.
            Given two measure-preserving partial maps $(D,p) \colon X \rightarrow Y$ and $(E,q) \colon Y \rightarrow Z$, their composition is defined as $\left( p^{-1}(E),q \circ p \right) \colon X \rightarrow Z$.
            We denote by $\Meas$ the category whose objects are measure spaces with finite total measure and whose morphisms are measure-preserving partial maps. 
        \end{definition}

        \begin{remark}
        \label{rem:structure of morphisms in Meas}
            As pointed out in \cite[\S3]{Leinster2023}, any embedding $\iota \colon X \rightarrow Y$ determines a morphism in the category $\Meas$ denoted by $\left( \iota(X),\iota^{-1} \right) \colon Y \rightarrow X$. 
            In particular, this implies that to every measurable subset $A \subseteq X$ we can associate a morphism $(A,\id_A) \colon X \rightarrow A$ in $\Meas$.
            Furthermore, every measure-preserving map $m \colon X \rightarrow Y$, being a special instance of a measure-preserving partial map, determines a morphism $(X,m) \colon X \rightarrow Y$ in $\Meas$. \\
            These elementary facts correspond to the existence of two faithful and injective-on-objects functors (i.e.~embeddings in the categorical sense) as follows: 
            \begin{equation*}
                \begin{tikzcd}
                    \embMeas \ar[dr, hook]      & \\
                            & \opMeas \\
                    \Meas_{\mathrm{pres}}^{\mathrm{op}} \ar[ur, hook]      & 
                \end{tikzcd}
            \end{equation*}
            where $\Meas_{\mathrm{pres}}$ denotes the category whose objects are measure spaces with finite total measure and whose morphisms are measure-preserving maps. \\
            Moreover, any measure-preserving partial map $(D,p) \colon X \rightarrow Y$ can be canonically expressed as the composition of the morphism $(D,\id_D) \colon X \rightarrow D$, associated to the embedding $D \hookrightarrow X$, with the measure-preserving map $p \colon D \rightarrow Y$. \\
            The peculiar structure of the morphisms of the category $\opMeas$ has two relevant consequences concerning functors and natural transformations:
            \begin{itemize}
                \item[(i)] any functor from $\opMeas$ to any category $\mathbf{C}$ is completely determined by its action on objects, embeddings and measure-preserving maps;
                \item[(ii)] any transformation between two functors from $\opMeas$ to any category $\mathbf{C}$ is natural if and only if it is natural with respect to both embeddings and measure-preserving maps. 
            \end{itemize}
        \end{remark}

        \begin{definition}
            We denote by $\NVect$ the category whose objects are normed vector spaces over the field $\K$ (either $\R$ or $\C$) and whose morphisms are linear contractive maps.
            We also introduce the category $\NVECT$, whose objects are normed vector spaces over the field $\K$ (either $\R$ or $\C$) and whose morphisms are bounded linear maps.
        \end{definition}

        \begin{remark}
            Clearly, $\NVect$ is a subcategory of $\NVECT$.   
        \end{remark}

        We now have all the necessary ingredients to introduce the categories in which the universal characterisation of the normed vector spaces of equivalence classes of simple functions with values in normed vector spaces, under equality almost everywhere, will take place.

        \begin{definition}
        \label{def:NV_embp}
            Let $p \in [1,\infty]$. 
            We denote by $\NVembp$ the category whose objects are pairs $(F,c)$, where $F \colon \embMeas \times \NVect \rightarrow \NVect$ is a bifunctor and $c$ assigns a bounded linear (i.e.~continuous) map $c_{(X,V)} \colon V \rightarrow F(X,V)$ to every object $(X,V) \in \embMeas \times \NVect$. 
            In addition, we require the following axioms to hold:
            \begin{equation}
            \tag{\textbf{O}}
            \label{ax:compatibility with linear contractive maps}
                F_{(\id_X,T)} \circ c_{(X,V)} = c_{(X,W)} \circ T,
            \end{equation}            
            for any object $X \in \embMeas$ and any morphism $T \colon V \rightarrow W$ in $\NVect$;
            \begin{equation}
            \tag{\textbf{I}}
            \label{ax:compatibility with complementary embeddings in the normed case}
                c_{(Y,V)}^{(X,V)} + c_{(Z,V)}^{(X,V)} = c_{(X,V)},
            \end{equation}
            for any morphisms $(\alpha,\id_V) \colon (Y,V) \rightarrow (X,V)$ and $(\beta,\id_V) \colon (Z,V) \rightarrow (X,V)$ in $\embMeas \times \NVect$, where $Y \xrightarrow{~\alpha~} X \xleftarrow{~\beta~} Z$ are a couple of complementary embeddings and the notation is explained in Remark \ref{rem:notation for the maps c} below;
            \begin{equation} 
            \tag{\textbf{III}}
            \label{ax:boundedness of c}
                \norm{c_{(X,V)}}_{\mathcal{L}(V,F(X,V))} \leq \mu_X(X)^{\frac{1}{p}}, \qquad \forall (X,V) \in \embMeas \times \NVect, 
            \end{equation}
            where we recall that the operator norm of $c_{(X,V)}$ is defined as
            \begin{equation*}
                \norm{c_{(X,V)}}_{\mathcal{L}(V,F(X,V))} = \inf \Set{\epsilon \geq 0 | \norm{c_{(X,V)}(v)}_{F(X,V)} \leq \epsilon \norm{v}_V, \quad \forall v \in V},
            \end{equation*}
            and, in the case $p = \infty$, the expression $\mu_X(X)^{\frac{1}{p}}$ is intended as $0$ if $\mu_X(X) = 0$ and as $1$ otherwise;
            \begin{equation}
            \tag{\textbf{IV}}
            \label{ax:norm inequality wrt complementary embeddings}
                \norm{F_{(\alpha,\id_V)}u + F_{(\beta,\id_V)}w}_{F(X,V)} \leq \left( \norm{u}_{F(Y,V)}^p + \norm{w}_{F(Z,V)}^p \right)^{\frac{1}{p}},
            \end{equation}
            for any morphisms $(\alpha,\id_V) \colon (Y,V) \rightarrow (X,V)$ and $(\beta,\id_V) \colon (Z,V) \rightarrow (X,V)$ in $\embMeas \times \NVect$, where $Y \xrightarrow{~\alpha~} X \xleftarrow{~\beta~} Z$ are a couple of complementary embeddings, for all $u \in F(Y,V)$ and $w \in F(Z,V)$ and where, in the case $p = \infty$, the right-hand side of the inequality is intended as $\max \Set{\norm{u}_{F(Y,V)},\norm{w}_{F(Z,V)}}$. \\
            A morphism $\psi \colon (F,c) \rightarrow (G,d)$ in the category $\NVembp$ is a natural transformation $\psi \colon F \Rightarrow G$ such that $\psi_{(X,V)} \circ c_{(X,V)} = d_{(X,V)}$, for all $(X,V) \in \embMeas \times \NVect$.
        \end{definition}

        \begin{remark}
        \label{rem:notation for the maps c}
            In axiom \ref{ax:compatibility with complementary embeddings in the normed case} and throughout the paper, for any object $(F,c) \in \NVembp$, we adopt a specific notation to denote the action of embeddings of measure spaces on the assignment $c$, via the bifunctor $F$.
            More precisely, given any morphism $(\iota,\id_V) \colon (X,V) \rightarrow (Y,V)$ in the category $\embMeas \times \NVect$, where $\iota \colon X \rightarrow Y$ is an embedding, we denote
            \begin{equation*}
                c_{(X,V)}^{(Y,V)} := F_{(\iota,\id_V)} \circ c_{(X,V)}.
            \end{equation*}
            There is a slight abuse in this notation, in the fact that the map embedding $X$ into $Y$ is not immediately evident.
            Nevertheless, which embedding of measure spaces is being considered should always be clear from the context. 
        \end{remark}

        \begin{example}
        \label{ex:simple functions with values in normed vector spaces 1}
            The fundamental example of an object in the category $\NVembp$ is the pair $(S^p,\sigma)$. 
            Here, $S^p \colon \embMeas \times \NVect \rightarrow \NVect$ is the bifunctor which associates, to any object $(X,V) \in \embMeas \times \NVect$, the normed vector space $S^p(X,V)$ of equivalence classes of simple functions on $X$ with values in $V$, under equality almost everywhere. 
            The norm on $S^p(X,V)$ is defined, for $p \in [1,\infty)$ and for any $f \in S^p(X,V)$, as
            \begin{equation}
            \label{eq:Lp norm for simple functions with values in normed vector spaces}
                \norm{f}_{S^p(X,V)} = \left( \sum_{v \in V} \norm{v}_V^p \mu_{X}\left( f^{-1}(v) \right) \right)^{\frac{1}{p}},
            \end{equation}
            where, by definition of (equivalence class of) simple functions, only finitely many summands are non-zero and, for $p = \infty$, as
            \begin{equation}
            \label{eq:Linfty norm for simple functions with values in normed vector spaces}
                \norm{f}_{S^\infty(X,V)} = \inf \Set{\epsilon \geq 0 | \norm{f(x)}_V \leq \epsilon \quad \text{a.e.~on} ~ X}.
            \end{equation}
            The action of the bifunctor $S^p$ on a morphism $(\iota,T) \colon (X,V) \rightarrow (Y,W)$ in the category $\embMeas \times \NVect$ is defined informally by extension to zero outside the image of the embedding $\iota \colon X \rightarrow Y$ and by post-composition with the linear contractive map $T \colon V \rightarrow W$. More precisely, $S^p_{(\iota,T)} \colon S^p(X,V) \rightarrow S^p(Y,W)$ is defined, for any $f \in S^p(X,V)$, by choosing a representative (denoted, by abuse of notation, by the same symbol $f$) and setting, for almost every $y \in Y$:
            \begin{equation}
            \label{eq:action of Sp on emb-morphisms}
                \begin{split}
                    S^p_{(\iota,T)}f \colon y \mapsto 
                        \begin{cases}
                            (T \circ f \circ \iota^{-1})(y) & \text{if} ~ y \in \iota(X), \\
                            0 & \text{otherwise,}
                        \end{cases}
                \end{split}
            \end{equation}
            where the fact that embeddings map measure-zero subsets into measure-zero subsets (see Definition \ref{def:embedding}) guarantees that this definition does not depend on the choice of the representative for $f$.
            That $S^p_{(\iota,T)}$ is a contractive map can be checked, for $p \in [1,\infty)$ and for any $f \in S^p(X,V)$, as follows:
            \begin{align}
                \norm{S^p_{(\iota,T)} f}_{S^p(Y,W)} &= \left( \sum_{v \in V} \norm{T(v)}_W^p \mu_Y\left( \iota\left( f^{-1}(v) \right) \right) \right)^{\frac{1}{p}} \notag \\
                &= \left( \sum_{v \in V} \norm{T(v)}_W^p \mu_X\left( f^{-1}(v) \right) \right)^{\frac{1}{p}} \label{eq:using that iota is an embedding} \\
                & \leq \left( \sum_{v \in V} \norm{v}_V^p \mu_X\left( f^{-1}(v) \right) \right)^{\frac{1}{p}} \label{eq:using that T is a contractive map} \\
                &= \norm{f}_{S^p(X,V)}, \notag
            \end{align}
            where equation \eqref{eq:using that iota is an embedding} follows from the fact that $\iota \colon X \rightarrow Y$ is an embedding, while inequality \eqref{eq:using that T is a contractive map} follows from the fact that $T \colon V \rightarrow W$ is a contractive map.
            In the case $p = \infty$, we have:
            \begin{align}
                \norm{S^\infty_{(\iota,T)} f}_{S^\infty(Y,W)} &= \inf \Set{\epsilon \geq 0 | \norm{(T \circ f \circ \iota^{-1})(y)}_W \leq \epsilon \quad \text{a.e.~on} ~ \iota(X)} \notag \\
                &= \inf \Set{\epsilon \geq 0 | \norm{(T \circ f)(x)}_W \leq \epsilon \quad \text{a.e.~on} ~ X} \notag \\
                &\leq \inf \Set{\epsilon \geq 0 | \norm{f(x)}_V \leq \epsilon \quad \text{a.e.~on} ~ X} \label{eq:using again that T is a contraction} \\
                &= \norm{f}_{S^\infty(X,V)}, \notag
            \end{align}
            where inequality \eqref{eq:using again that T is a contraction} follows from the fact that $T \colon V \rightarrow W$ is a contractive map. \\
            For any object $(X,V) \in \embMeas \times \NVect$, the map $\sigma_{(X,V)} \colon V \rightarrow S^p(X,V)$ is defined by:
            \begin{equation}
            \label{eq:definition of sigma}
                \sigma_{(X,V)}(v) = v \cdot \chi_X,
            \end{equation}
            where $\chi_X$ denotes the (equivalence class, under equality almost everywhere, of the) constant function on $X$ with value $1$ (i.e.~the indicator function supported on $X$) and $\cdot$ denotes the natural extension of the scalar multiplication in $V$ to the ring of (equivalence classes, under equality almost everywhere, of) $\K$-valued simple functions on $X$. 
            Linearity of $\sigma_{(X,V)}$ is clear from the definition, while the fact that it is a bounded map is guaranteed, for the case $p \in [1,\infty)$, by the following basic equality: 
            \begin{equation*}
                \norm{\sigma_{(X,V)}(v)}_{S^p(X,V)} = \norm{v}_V \mu_X(X)^{\frac{1}{p}}, \qquad \forall v \in V.
            \end{equation*}
            Analogously, for the case $p = \infty$, boundedness of $\sigma_{(X,V)}$ follows directly from:
            \begin{equation*}
                \begin{cases}
                    \norm{\sigma_{(X,V)}(v)}_{S^\infty(X,V)} = \norm{v}_V   & \text{if} ~ \mu_X(X) \neq 0, \\
                    \norm{\sigma_{(X,V)}(v)}_{S^\infty(X,V)} = 0   & \text{if} ~ \mu_X(X) = 0,
                \end{cases}
            \end{equation*} 
            for all $v \in V$. The last two equations imply, respectively, that 
            \begin{equation*}
                \norm{\sigma_{(X,V)}}_{\mathcal{L}\left( V,S^p(X,V) \right)} = \mu_X(X)^{\frac{1}{p}}
            \end{equation*}
            and
            \begin{equation*}
                \begin{cases}
                    \norm{\sigma_{(X,V)}}_{\mathcal{L}\left( V,S^\infty(X,V) \right)} = 1   & \text{if} ~ \mu_X(X) \neq 0, \\
                    \norm{\sigma_{(X,V)}}_{\mathcal{L}\left( V,S^\infty(X,V) \right)} = 0   & \text{if} ~ \mu_X(X) = 0,
                \end{cases}
            \end{equation*}
            which, incidentally, correspond to axiom \ref{ax:boundedness of c} being satisfied. \\
            In order to check the validity of axiom \ref{ax:compatibility with linear contractive maps}, it suffices to apply formula \eqref{eq:action of Sp on emb-morphisms} to a morphism of the type $(\id_X,T) \colon (X,V) \rightarrow (X,W)$ in $\embMeas \times \NVect$. As expected, for all $v \in V$, we obtain:
            \begin{equation*}
                \begin{split}
                    \left( S^p_{(\id_X,T)} \circ \sigma_{(X,V)} \right)(v) &= S^p_{(\id_X,T)}(v \cdot \chi_X) \\
                    &= T(v) \cdot \chi_X \\
                    &= \left( \sigma_{(X,W)} \circ T \right)(v). 
                \end{split}
            \end{equation*}
            Axiom \ref{ax:compatibility with complementary embeddings in the normed case} follows from the elementary fact that the sum of indicator functions supported on disjoint subsets (up to measure-zero sets) gives the indicator function supported on the union of the respective supports (up to measure-zero sets). \\
            Finally, we have to check the validity of axiom \ref{ax:norm inequality wrt complementary embeddings}.
            For $p \in [1,\infty)$ and for any simple functions $f \in S^p{(Y,V)}$ and $g \in S^p{(Z,V)}$, denoting 
            \begin{equation*}
                h = S^p_{(\alpha,\id_V)}f + S^p_{(\beta,\id_V)}g \in S^p{(X,V)},
            \end{equation*}
            with $Y \xrightarrow{~\alpha~} X \xleftarrow{~\beta~} Z$ any couple of complementary embeddings, we obtain indeed:
            \begin{align}
                \norm{h}_{S^p(X,V)} &= \left( \sum_{v \in V} \norm{v}_V^p \mu_{X}\left( h^{-1}(v) \right) \right)^{\frac{1}{p}} \notag \\
                &= \left( \sum_{v \in V} \norm{v}_V^p \left( \mu_{X}\left( h^{-1}(v) \cap \alpha(Y) \right) + \mu_{X}\left( h^{-1}(v) \cap \beta(Z) \right) \right) \right)^{\frac{1}{p}} \label{eq:complementary embeddings and additivity of mu} \\
                &= \left( \sum_{v \in V} \norm{v}_V^p \mu_{Y}\left( f^{-1}(v) \right) + \sum_{v \in V} \norm{v}_V^p \mu_{Z}\left( g^{-1}(v) \right) \right)^{\frac{1}{p}} \label{eq:h coincides with f on Y and with g on Z} \\
                &= \left( \norm{f}_{S^p(Y,V)}^p + \norm{g}_{S^p(Z,V)}^p \right)^{\frac{1}{p}}, \notag
            \end{align}
            where equation \eqref{eq:complementary embeddings and additivity of mu} follows from the fact that $X = \alpha(Y) \cup \beta(Z)$ and $\alpha(Y) \cap \beta(Z) = \emptyset$ (see Definition \ref{def:complementary embeddings}) and from additivity of the measure $\mu_X$, while equation \eqref{eq:h coincides with f on Y and with g on Z} follows from the fact that $h$ coincides, respectively, with $f$ on $\alpha(Y)$ and with $g$ on $\beta(Z)$ and from the fact that embeddings preserve measures. 
            For $p = \infty$ and for any simple functions $f \in S^\infty{(Y,V)}$ and $g \in S^\infty{(Z,V)}$, denoting 
            \begin{equation*}
                h = S^\infty_{(\alpha,\id_V)}f + S^\infty_{(\beta,\id_V)}g \in S^\infty{(X,V)},
            \end{equation*}
            with $Y \xrightarrow{~\alpha~} X \xleftarrow{~\beta~} Z$ any couple of complementary embeddings, we obtain:
            \begin{align}
                \norm{h}_{S^\infty(X,V)} &= \inf \Set{\epsilon \geq 0 | \norm{h(x)}_V \leq \epsilon \quad \text{a.e.~on} ~ X} \notag \\
                &= \max \big\{ \inf \Set{\epsilon_1 \geq 0 ~|~ \norm{h(x)}_V \leq \epsilon_1 \quad \text{a.e.~on} ~ \alpha(Y)}, \notag \\
                &\qquad \qquad \inf \Set{\epsilon_2 \geq 0 ~|~ \norm{h(x)}_V \leq \epsilon_2 \quad \text{a.e.~on} ~ \beta(Z)} \big\} \notag \\
                &= \max \big\{ \inf \Set{\epsilon_1 \geq 0 ~|~ \norm{f(y)}_V \leq \epsilon_1 \quad \text{a.e.~on} ~ Y}, \label{eq:h coincides with f on Y and with g on Z and embeddings dont matter} \\
                &\qquad \qquad \inf \Set{\epsilon_2 \geq 0 ~|~ \norm{g(z)}_V \leq \epsilon_2 \quad \text{a.e.~on} ~ Z} \big\} \notag \\
                &= \max \Set{\norm{f}_{S^\infty(Y,V)},\norm{g}_{S^\infty(Z,V)}}, \notag
            \end{align}
            where equation \eqref{eq:h coincides with f on Y and with g on Z and embeddings dont matter} follows from the fact that $h$ coincides, respectively, with $f$ on $\alpha(Y)$ and with $g$ on $\beta(Z)$.
        \end{example}

        \begin{definition}
        \label{def:NVp}
            Let $p \in [1,\infty]$. 
            We denote by $\NVp$ the category whose objects are pairs $(F,c)$, where $F \colon \opMeas \times \NVect \rightarrow \NVect$ is a bifunctor and $c$ assigns a bounded linear (i.e.~continuous) map $c_{(X,V)} \colon V \rightarrow F(X,V)$ to every object $(X,V) \in \opMeas \times \NVect$. 
            In addition, we require the following axioms to hold:
            \begin{equation}
            \tag{\textbf{O*}}
            \label{ax:compatibility with linear contractive maps 1}
                F_{(\id_X,T)} \circ c_{(X,V)} = c_{(X,W)} \circ T,
            \end{equation}            
            for any object $X \in \opMeas$ and any morphism $T \colon V \rightarrow W$ in $\NVect$;
            \begin{equation}
            \tag{\textbf{I*}}
            \label{ax:compatibility with complementary embeddings in the normed case 1}
                c_{(Y,V)}^{(X,V)} + c_{(Z,V)}^{(X,V)} = c_{(X,V)},
            \end{equation}
            for any morphisms 
            \begin{equation*}
                \begin{split}
                    \left( \left( \alpha(Y),\alpha^{-1} \right)^{\mathrm{op}},\id_V \right) &\colon (Y,V) \rightarrow (X,V) \\
                    \left( \left( \beta(Z),\beta^{-1} \right)^{\mathrm{op}},\id_V \right) &\colon (Z,V) \rightarrow (X,V)
                \end{split}
            \end{equation*}
            in $\opMeas \times \NVect$, where $Y \xrightarrow{~\alpha~} X \xleftarrow{~\beta~} Z$ are a couple of complementary embeddings and the notation is explained in Remark \ref{rem:notation for the maps c 1} below; 
            \begin{equation}
            \tag{\textbf{II*}} 
            \label{ax:compatibility with measure-preserving maps in the normed case}
                F_{(m^{\mathrm{op}},\id_V)} \circ c_{(X,V)} = c_{(Y,V)},
            \end{equation}
            for any morphism $(m^{\mathrm{op}},\id_V) \colon (X,V) \rightarrow (Y,V)$ in the category $\opMeas \times \NVect$, where $m \colon Y \rightarrow X$ is a measure-preserving map;
            \begin{equation} 
            \tag{\textbf{III*}}
            \label{ax:boundedness of c 1}
                \norm{c_{(X,V)}}_{\mathcal{L}(V,F(X,V))} \leq \mu_X(X)^{\frac{1}{p}}, \qquad \forall (X,V) \in \opMeas \times \NVect, 
            \end{equation}
            where we recall that the operator norm of $c_{(X,V)}$ is defined as
            \begin{equation*}
                \norm{c_{(X,V)}}_{\mathcal{L}(V,F(X,V))} = \inf \Set{\epsilon \geq 0 | \norm{c_{(X,V)}(v)}_{F(X,V)} \leq \epsilon \norm{v}_V, \quad \forall v \in V},
            \end{equation*}
            and, in the case $p = \infty$, the expression $\mu_X(X)^{\frac{1}{p}}$ is intended as $0$ if $\mu_X(X) = 0$ and as $1$ otherwise;
            \begin{equation}
            \tag{\textbf{IV*}}
            \label{ax:norm inequality wrt complementary embeddings 1}
                \begin{split}
                    &\norm{F_{\left( \left( \alpha(Y),\alpha^{-1} \right)^{\mathrm{op}},\id_V \right)}u + F_{\left( \left( \beta(Z),\beta^{-1} \right)^{\mathrm{op}},\id_V \right)}w}_{F(X,V)} \leq \left( \norm{u}_{F(Y,V)}^p + \norm{w}_{F(Z,V)}^p \right)^{\frac{1}{p}},
                \end{split}
            \end{equation}
            for any couple $Y \xrightarrow{~\alpha~} X \xleftarrow{~\beta~} Z$ of complementary embeddings, for all $u \in F(Y,V)$ and $w \in F(Z,V)$ and where, in the case $p = \infty$, the right hand side of the inequality is intended as $\max \Set{\norm{u}_{F(Y,V)},\norm{w}_{F(Z,V)}}$. \\
            A morphism $\psi \colon (F,c) \rightarrow (G,d)$ in the category $\NVp$ is a natural transformation $\psi \colon F \Rightarrow G$ such that $\psi_{(X,V)} \circ c_{(X,V)} = d_{(X,V)}$, for all $(X,V) \in \opMeas \times \NVect$.
        \end{definition}

        \begin{remark}
        \label{rem:notation for the maps c 1}
            In axiom \ref{ax:compatibility with complementary embeddings in the normed case 1} and throughout the paper, for any object $(F,c) \in \NVp$, we adopt a specific notation to denote the action of embeddings of measure spaces on the assignment $c$, via the bifunctor $F$.
            More precisely, given any morphism $\left( \left( \iota(X),\iota^{-1} \right)^{\mathrm{op}},\id_V \right) \colon (X,V) \rightarrow (Y,V)$ in the category $\opMeas \times \NVect$, where $\iota \colon X \rightarrow Y$ is an embedding, we denote
            \begin{equation*}
                c_{(X,V)}^{(Y,V)} := F_{\left( \left( \iota(X),\iota^{-1} \right)^{\mathrm{op}},\id_V \right)} \circ c_{(X,V)}.
            \end{equation*}
            There is a double abuse in this notation, in the fact that the map embedding $X$ into $Y$ is not immediately evident and the notation is the same as the one introduced in Remark \ref{rem:notation for the maps c}.
            Nevertheless, which embedding of measure spaces is being considered and in which category should always be clear from the context.
        \end{remark}
        
        \begin{example}
        \label{ex:simple functions with values in normed vector spaces 2}
            The fundamental example of an object in the category $\NVp$ is denoted, by abuse of notation, with the same symbol $(S^p,\sigma)$ as in Example \ref{ex:simple functions with values in normed vector spaces 1} and the bifunctor $S^p$ is defined in the same way on objects. \\ 
            The action of $S^p$ on a morphism of the type $\left( \left( \iota(X),\iota^{-1} \right)^{\mathrm{op}},T \right) \colon (X,V) \rightarrow (Y,W)$ in $\opMeas \times \NVect$, where $\iota \colon X \rightarrow Y$ is an embedding, is defined by formula \eqref{eq:action of Sp on emb-morphisms}.
            The fact that $S^p_{\left( \left( \iota(X),\iota^{-1} \right)^{\mathrm{op}},T \right)} \colon S^p(X,V) \rightarrow S^p(Y,W)$ is a contractive map is shown using the same arguments as in Example \ref{ex:simple functions with values in normed vector spaces 1}.
            The action of $S^p$ on morphisms of the type $\left( m^{\mathrm{op}},T \right) \colon (X,V) \rightarrow (Y,W)$ in $\opMeas \times \NVect$, where $m \colon Y \rightarrow X$ is a measure-preserving map, is defined informally by pre-composition with the measure-preserving map $m$ and post-composition with the linear contractive map $T$. 
            More explicitly, we set:
            \begin{equation}
            \label{eq:action of Sp on measure-preserving maps}
                \begin{split}
                    S^p_{\left( m^{\mathrm{op}},T \right)} \colon S^p(X,V) &\rightarrow S^p(Y,W) \\
                    f &\mapsto S^p_{\left( m^{\mathrm{op}},T \right)}f = T \circ f \circ m,
                \end{split}
            \end{equation}
            and well-posedness is guaranteed by the fact that $m$ is a measure-preserving map. 
            The fact that $S^p_{\left( m^{\mathrm{op}},T \right)}$ is a contractive map can be checked, for $p \in [1,\infty)$ and for any $f \in S^p(X,V)$, as follows:
            \begin{align}
                \norm{S^p_{\left( m^{\mathrm{op}},T \right)}f}_{S^p(Y,W)} &= \left( \sum_{v \in V} \norm{T(v)}_W^p \mu_Y\left( m^{-1}\left( f^{-1}(v) \right) \right) \right)^{\frac{1}{p}} \notag \\
                &= \left( \sum_{v \in V} \norm{T(v)}_W^p \mu_X\left( f^{-1}(v) \right) \right)^{\frac{1}{p}} \label{eq:using that m is a measure-preserving map} \\
                & \leq \left( \sum_{v \in V} \norm{v}_V^p \mu_X\left( f^{-1}(v) \right) \right)^{\frac{1}{p}} \label{eq:using contractiveness of T} \\
                &= \norm{f}_{S^p(X,V)}, \notag
            \end{align}
            where equation \eqref{eq:using that m is a measure-preserving map} follows from the fact that $m \colon Y \rightarrow X$ is measure-preserving, while inequality \eqref{eq:using contractiveness of T} follows from the fact that $T \colon V \rightarrow W$ is a contractive map.
            In the case $p = \infty$, we have:
            \begin{align}
                \norm{S^\infty_{\left( m^{\mathrm{op}},T \right)} f}_{S^\infty(Y,W)} &= \inf \Set{\epsilon \geq 0 | \norm{(T \circ f \circ m)(y)}_W \leq \epsilon \quad \text{a.e.~on} ~ Y} \notag \\
                &\leq \inf \Set{\epsilon \geq 0 | \norm{(T \circ f)(x)}_W \leq \epsilon \quad \text{a.e.~on} ~ X} \notag \\
                &\leq \inf \Set{\epsilon \geq 0 | \norm{f(x)}_V \leq \epsilon \quad \text{a.e.~on} ~ X} \label{eq:using again contractiveness of T} \\
                &= \norm{f}_{S^\infty(X,V)}, \notag
            \end{align}
            where inequality \eqref{eq:using again contractiveness of T} follows from the fact that $T \colon V \rightarrow W$ is a contractive map. \\
            For any object $(X,V) \in \opMeas \times \NVect$, the linear map $\sigma_{(X,V)} \colon V \rightarrow S^p(X,V)$ is defined by formula \eqref{eq:definition of sigma} and the fact that it is a bounded map follows from the same arguments used in Example \ref{ex:simple functions with values in normed vector spaces 1}. \\
            Axioms \ref{ax:compatibility with linear contractive maps 1}, \ref{ax:compatibility with complementary embeddings in the normed case 1}, \ref{ax:boundedness of c 1} and \ref{ax:norm inequality wrt complementary embeddings 1} are satisfied for the same reasons, up to notational adaptations regarding the morphisms involved, as in Example \ref{ex:simple functions with values in normed vector spaces 1}.
            Axiom \ref{ax:compatibility with measure-preserving maps in the normed case} is satisfied as a direct consequence of formula \eqref{eq:action of Sp on measure-preserving maps}. 
        \end{example}

        \begin{remark}
        \label{rem:forgetful functor from NVp to NVembp}
            It is straightforward to extend the faithful and injective-on-objects functor $\embMeas \hookrightarrow \opMeas$ outlined in Remark \ref{rem:structure of morphisms in Meas}, to a faithful and injective-on-objects bifunctor $\embMeas \times \NVect \hookrightarrow \opMeas \times \NVect$. 
            In other words, we can regard $\embMeas \times \NVect$ as a subcategory of $\opMeas \times \NVect$. \\
            This induces, in turn, for any $p \in [1,\infty]$, a faithful functor $\mathcal{U}^p \colon \NVp \rightarrow \NVembp$. 
            In particular, the functor $\mathcal{U}^p$ acts, on any object $(F,c) \in \NVp$, by restricting the domain of the bifunctor $F \colon \opMeas \times \NVect \rightarrow \NVect$ to the subcategory $\embMeas \times \NVect$ and leaving the assignment $c$ unchanged.
            The validity of axioms \ref{ax:compatibility with linear contractive maps}, \ref{ax:compatibility with complementary embeddings in the normed case}, \ref{ax:boundedness of c} and \ref{ax:norm inequality wrt complementary embeddings} for $\mathcal{U}^p(F,c)$ is ensured by the fact that $(F,c)$ satisfies axioms \ref{ax:compatibility with linear contractive maps 1}, \ref{ax:compatibility with complementary embeddings in the normed case 1}, \ref{ax:compatibility with measure-preserving maps in the normed case}, \ref{ax:boundedness of c 1} and \ref{ax:norm inequality wrt complementary embeddings 1}.    
            The action of the functor $\mathcal{U}^p$ on any morphism $\psi \colon (F,c) \rightarrow (G,d)$ in $\NVp$ is defined by restricting the naturality of $\psi \colon F \Rightarrow G$ to the morphisms of the category $\embMeas \times \NVect$. \\
            As a special instance of this feature, we have that the object $(S^p,\sigma) \in \NVembp$, discussed in Example \ref{ex:simple functions with values in normed vector spaces 1}, is the image of the object $(S^p,\sigma) \in \NVp$, discussed in Example \ref{ex:simple functions with values in normed vector spaces 2}, via the functor $\mathcal{U}^p$.
            Incidentally, this also motivates why we use the same symbol to denote them.   
        \end{remark}

    \subsection{Preparatory lemmas and main result}
    \label{sec:preparatory lemmas and main result}
    
        In this section, we first show four technical lemmas, which represent the generalisation to our setting, respectively, of the results \cite[Lemma 3.1]{Leinster2023}, \cite[Lemma 3.2]{Leinster2023}, \cite[Lemma 3.3]{Leinster2023} and \cite[Lemma 3.5]{Leinster2023}.
        We then use the lemmas to prove the first main result of this paper.

        \begin{remark}
        \label{rem:morphisms and functors in product categories in the normed case}
            Before proceeding, we point out an elementary observation which will be useful in the sequel.
            The bifunctors belonging to $\NVembp$ and $\NVp$ are defined, respectively, on the categories $\embMeas \times \NVect$ and $\opMeas \times \NVect$. 
            As in any product category, any two morphisms, say $(\iota,T) \colon (X,V) \rightarrow (Y,W)$ in $\embMeas \times \NVect$ and $\left( (D,p)^{\mathrm{op}},T \right) \colon (X,V) \rightarrow (Y,W)$ in $\opMeas \times \NVect$, can be decomposed as:
            \begin{equation} 
            \label{eq:decomposition of morphisms in product categories in the normed case}
                \begin{split}
                    (\iota,T) &= (\iota,\id_W) \circ (\id_X,T) = (\id_Y,T) \circ (\iota,\id_V), \\
                    \left( (D,p)^{\mathrm{op}},T \right) &= \left( (D,p)^{\mathrm{op}},\id_W \right) \circ (\id_X,T) = (\id_Y,T) \circ \left( (D,p)^{\mathrm{op}},\id_V \right).
                \end{split}
            \end{equation}
            The action on these morphisms of any two bifunctors $F \colon \embMeas \times \NVect \rightarrow \mathbf{C}$ and $G \colon \opMeas \times \NVect \rightarrow \mathbf{C}$ to any category $\mathbf{C}$ gives, respectively:
            \begin{equation*}
                \begin{split}
                    F_{(\iota,T)} &= F_{(\iota,\id_W)} \circ F_{(\id_X,T)} = F_{(\id_Y,T)} \circ F_{(\iota,\id_V)}, \\
                    G_{\left( (D,p)^{\mathrm{op}},T \right)} &= G_{\left( (D,p)^{\mathrm{op}},\id_W \right)} \circ G_{(\id_X,T)} = G_{(\id_Y,T)} \circ G_{\left( (D,p)^{\mathrm{op}},\id_V \right)}.
                \end{split}
            \end{equation*}
        \end{remark}

        \begin{lemma}
        \label{lemma:Beck-Chevalley conditions in the normed case}
            Let $F \colon \opMeas \times \NVect \rightarrow \mathbf{C}$ be a bifunctor taking values in any category $\mathbf{C}$. 
            Let $(m^\mathrm{op},T) \colon (X,V) \rightarrow (Y,W)$ be a morphism in $\opMeas \times \NVect$, where $m \colon Y \rightarrow X$ is a measure-preserving map, and let $(B,V)$ be an object in $\opMeas \times \NVect$, where $B \subseteq X$ is a measurable subset. 
            We write the associated commutative diagram in the category $\opMeas \times \NVect$ as:
            \begin{equation}  
            \label{eq:Beck-Chevally diagram in the normed case}
                \begin{tikzcd}
                    (B,V) \ar[dd, "{\left( \left( B,\id_B \right)^{\mathrm{op}},\id _V \right)}"'] \ar[rrr, "{\left( (m')^{\mathrm{op}},T \right)}"] & & & \left( m^{-1}(B),W \right) \ar[dd, "{\left( \left( m^{-1}(B),\id_{m^{-1}(B)} \right)^{\mathrm{op}},\id_W \right)}"]  \\
                        & & & \\
                    (X,V) \ar[rrr, "{\left( m^\mathrm{op},T \right)}"] & & & (Y,W)
                \end{tikzcd}
            \end{equation}
            where $m' \colon m^{-1}(B) \rightarrow B$ denotes the restriction of the measure-preserving map $m$ to $m^{-1}(B)$. 
            Then, the following diagram commutes in $\mathbf{C}$: 
            \begin{equation*}
                \begin{tikzcd}
                    F(B,V) \ar[dd, "F_{\left( \left( B,\id_B \right)^{\mathrm{op}},\id _V \right)}"'] \ar[rrr, "F_{\left( (m')^{\mathrm{op}},T \right)}"] & & & F\left( m^{-1}(B),W \right) \ar[dd, "F_{( ( m^{-1}(B),\id_{m^{-1}(B)} )^{\mathrm{op}},\id_W )}"]  \\
                        & & & \\
                    F(X,V) \ar[rrr, "F_{\left( m^\mathrm{op},T \right)}"] & & & F(Y,W)
                \end{tikzcd}
            \end{equation*}
        \end{lemma}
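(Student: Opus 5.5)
The plan is to reduce the statement to the commutativity of the underlying diagram \eqref{eq:Beck-Chevally diagram in the normed case} in $\opMeas \times \NVect$. Once that diagram is known to commute, the claim follows at once because $F$ is a functor: applying $F$ to two equal composites yields equal composites in $\mathbf{C}$. So the whole proof consists in checking that
\begin{equation*}
    \left( \left( m^{-1}(B),\id_{m^{-1}(B)} \right)^{\mathrm{op}},\id_W \right) \circ \left( (m')^{\mathrm{op}},T \right) = \left( m^{\mathrm{op}},T \right) \circ \left( \left( B,\id_B \right)^{\mathrm{op}},\id_V \right)
\end{equation*}
in $\opMeas \times \NVect$.

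First, we use that composition in a product category is component-wise, as fixed in the notation section. The second components are then $\id_W \circ T = T$ and $T \circ \id_V = T$, which agree. It remains to compare the first components in $\opMeas$. Passing to $\Meas$ reverses the order of composition, so the required identity becomes
\begin{equation*}
    (m') \circ \left( m^{-1}(B),\id_{m^{-1}(B)} \right) = (B,\id_B) \circ (Y,m)
\end{equation*}
as measure-preserving partial maps $Y \rightarrow B$. Here $m$ is regarded as the total partial map $(Y,m)$ and $m'$ as $(m^{-1}(B),m')$, as in Remark \ref{rem:structure of morphisms in Meas}.

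Next, we compute both sides with the composition rule $(E,q)\circ(D,p) = (p^{-1}(E), q\circ p)$.

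\begin{itemize}
    \item \textbf{Left-hand side.} The domain is $\id^{-1}(m^{-1}(B)) = m^{-1}(B)$ and the map is $m' \circ \id = m|_{m^{-1}(B)}$.
    \item \textbf{Right-hand side.} The domain is $m^{-1}(B)$ and the map is $\id_B \circ m|_{m^{-1}(B)}$.
\end{itemize}

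These coincide, so the first components agree as well.

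The only point needing care is bookkeeping rather than mathematics: tracking the direction reversal between $\Meas$ and $\opMeas$, and confirming that $m'$ is a genuine morphism. For the latter, $m^{-1}(B)$ is measurable because $m$ is measurable. Moreover, $m'$ is measure-preserving, since for measurable $A \subseteq B$ we have $m'^{-1}(A) = m^{-1}(A)$, and so $\mu_Y(m'^{-1}(A)) = \mu_X(A) = \mu_B(A)$. With the diagram established, functoriality of $F$ concludes the proof.
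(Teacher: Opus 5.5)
Your proposal is correct and is essentially the paper's argument. The square commutes in $\opMeas \times \NVect$ because its $\NVect$-components are trivially equal, and its $\opMeas$-components, reversed into $\Meas$, both compose to $(m^{-1}(B),m')$; functoriality of $F$ then finishes the proof, with your component-wise computation making explicit what the paper compresses into its auxiliary commutative square in $\Meas \times \NVect$.
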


        \begin{proof}
            The result follows directly from functoriality of $F$ and from commutativity of the following diagram in the category $\Meas \times \NVect$
            \begin{equation*}
                \begin{tikzcd}
                    \left( m^{-1}(B),V \right) \ar[rr, "{(m',T)}"] & & (B,W)  \\
                        & & \\
                    (Y,V) \ar[uu, "{\left( \left( m^{-1}(B),\id_{m^{-1}(B)} \right),\id_V \right)}"] \ar[rr, "{(m,T)}"] & & (X,W) \ar[uu, "{\left( \left( B,\id_B \right),\id _W \right)}"']
                \end{tikzcd}
            \end{equation*}
            where, according to Remark \ref{rem:structure of morphisms in Meas}, the morphisms $\left( m^{-1}(B),\id_{m^{-1}(B)} \right) \colon Y \rightarrow m^{-1}(B)$ and $\left( B,\id_B \right) \colon X \rightarrow B$ in the category $\Meas$ are associated, respectively, to the measurable subsets $m^{-1}(B) \subseteq Y$ and $B \subseteq X$.
        \end{proof}

        \begin{lemma}
        \label{lemma:functors and pairwise disjoint embeddings in the normed case}
            Let $\big\{ (\iota_r,\id_V) \colon (X_r,V) \rightarrow (Y,V) \big\}_{1 \leq r \leq n}$, for some $n \in \N$, be a finite family of morphisms in $\embMeas \times \NVect$, where $\iota_r \colon X_r \rightarrow Y$ are embeddings with pairwise disjoint images.
            Let $(F,c) \in \NVembp$. 
            Then, we have that:
            \begin{equation*}                           
                c_{\left( \iota_1(X_1) \cup \cdots \cup \iota_n(X_n),V \right)}^{(Y,V)} = c_{(X_1,V)}^{(Y,V)} + \cdots + c_{(X_n,V)}^{(Y,V)},
            \end{equation*}
            where, on the left-hand side, the embedding of the subset $\iota_1(X_1) \cup \cdots \cup \iota_n(X_n) \subseteq Y$ is simply given by the inclusion, while, on the right-hand side, we consider the embeddings $\iota_r \colon X_r \rightarrow Y$.
            In particular, $c_{(\emptyset,V)}^{(Y,V)} = 0$, for any $Y \in \embMeas$.
        \end{lemma}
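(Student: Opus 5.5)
We argue by induction on $n$, using only axiom \ref{ax:compatibility with complementary embeddings in the normed case} together with functoriality of $F$ in the first variable. Write $U := \iota_1(X_1) \cup \cdots \cup \iota_n(X_n) \subseteq Y$, endowed with the measure space structure making the inclusion $U \hookrightarrow Y$ an embedding.

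The first step is the empty case. Take the complementary pair $\emptyset \hookrightarrow Y \xleftarrow{\id_Y} Y$. Axiom \ref{ax:compatibility with complementary embeddings in the normed case} gives
\[
c_{(\emptyset,V)}^{(Y,V)} + c_{(Y,V)}^{(Y,V)} = c_{(Y,V)} .
\]
Since $F_{(\id_Y,\id_V)} = \id$, we have $c_{(Y,V)}^{(Y,V)} = c_{(Y,V)}$, hence $c_{(\emptyset,V)}^{(Y,V)} = 0$. This also settles $n=0$. The case $n=1$ is a tautology once we note that $\iota_1$ factors as $X_1 \to \iota_1(X_1) \hookrightarrow Y$, where the first map is a bijective embedding. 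We still need $c_{(X_1,V)}^{(Y,V)}$, computed via $\iota_1$, to agree with $c_{(\iota_1(X_1),V)}^{(Y,V)}$, computed via the inclusion.

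This identification is the key step and the main obstacle. Apply axiom \ref{ax:compatibility with complementary embeddings in the normed case} to the complementary pair $X_1 \xrightarrow{\iota_1'} \iota_1(X_1) \hookleftarrow \emptyset$, where $\iota_1'$ is the corestriction. Using the empty case for the space $\iota_1(X_1)$, this gives $F_{(\iota_1',\id_V)}\circ c_{(X_1,V)} = c_{(\iota_1(X_1),V)}$. Composing with $F$ of the inclusion and using functoriality, we obtain $c_{(X_1,V)}^{(Y,V)} = c_{(\iota_1(X_1),V)}^{(Y,V)}$. This reduces everything to measurable subsets of $Y$ with their inclusions.

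For the inductive step, set $U' = \iota_1(X_1) \cup \cdots \cup \iota_{n-1}(X_{n-1})$. By disjointness, the inclusions $U' \hookrightarrow U \hookleftarrow \iota_n(X_n)$ are complementary embeddings. Axiom \ref{ax:compatibility with complementary embeddings in the normed case} in $U$ gives
\[
c_{(U',V)}^{(U,V)} + c_{(\iota_n(X_n),V)}^{(U,V)} = c_{(U,V)} .
\]
Apply the linear map $F_{(U\hookrightarrow Y,\id_V)}$ to both sides. Functoriality, since composites of inclusions are inclusions, turns the left side into $c_{(U',V)}^{(Y,V)} + c_{(\iota_n(X_n),V)}^{(Y,V)}$. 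The inductive hypothesis applied to $U'$, together with the identification step for $X_n$, then yields the claimed formula.
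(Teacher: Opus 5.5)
Your proof is correct and follows essentially the paper's argument: induction on $n$ using axiom \ref{ax:compatibility with complementary embeddings in the normed case} on the complementary pair (union of the earlier pieces, last piece) inside the union, then pushing forward along the inclusion into $Y$ by functoriality. The only difference is cosmetic. The paper takes the corestriction of $\iota_n$ itself as the second leg of that complementary pair, and uses $\emptyset \to \emptyset \leftarrow \emptyset$ for the base case, so your separate identification of $c_{(X_n,V)}^{(Y,V)}$ with $c_{(\iota_n(X_n),V)}^{(Y,V)}$ is valid but not actually needed.
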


        \begin{proof}
            By abuse of notation, let us denote with $X = \iota_1(X_1) \cup \cdots \cup \iota_n(X_n) \subseteq Y$, for any $n \in \N$. 
            First, we prove, by induction on $n$, that 
            \begin{equation}
            \label{eq:inductive sum for disjoint embeddings}
                c_{(X,V)} = c_{(X_1,V)}^{(X,V)} + \cdots + c_{(X_n,V)}^{(X,V)}.
            \end{equation}
            The basic step, $n=0$, follows applying axiom \ref{ax:compatibility with complementary embeddings in the normed case} to the couple of morphisms $(\emptyset,V) \xrightarrow{~(\id_{\emptyset},\id_V)~} (\emptyset,V) \xleftarrow{~(\id_{\emptyset},\id_V)~} (\emptyset,V)$, which gives $c_{(\emptyset,V)} + c_{(\emptyset,V)} = c_{(\emptyset,V)}$, i.e.~$c_{(\emptyset,V)} = 0$. \\
            The inductive step follows again from axiom \ref{ax:compatibility with complementary embeddings in the normed case}. 
            Indeed, assume formula \eqref{eq:inductive sum for disjoint embeddings} holds for $n \in \N$ and consider a family of morphisms $\{(\iota_r,\id_V) \colon (X_r,V) \rightarrow (Y,V)\}_{1 \leq r \leq n+1}$, where the embeddings $\iota_r \colon X_r \rightarrow Y$, for $1 \leq r \leq n+1$, have pairwise disjoint images.  
            We obtain:
            \begin{equation*}
                \begin{split}
                    c_{(X,V)} &= c_{\left( \iota_1(X_1) \cup \cdots \cup \iota_{n+1}(X_{n+1}),V \right)} \\
                    &= c_{\left( \iota_1(X_1) \cup \cdots \cup \iota_n(X_n),V \right)}^{(X,V)} + c_{(X_{n+1},V)}^{(X,V)} \\
                    &= c_{(X_1,V)}^{(X,V)} + \cdots + c_{(X_n,V)}^{(X,V)} + c_{(X_{n+1},V)}^{(X,V)},
                \end{split}
            \end{equation*}
            where, in the second equality, we used axiom \ref{ax:compatibility with complementary embeddings in the normed case} and, in the third equality, we used the inductive hypothesis. \\ 
            To conclude, functoriality of $F$ with respect to the embedding of $X$ into $Y$ gives:
            \begin{equation*}
                c_{(X,V)}^{(Y,V)} = \left( c_{(X_1,V)}^{(X,V)} + \cdots + c_{(X_n,V)}^{(X,V)} \right)^{(Y,V)} = c_{(X_1,V)}^{(Y,V)} + \cdots + c_{(X_n,V)}^{(Y,V)}.
            \end{equation*}
        \end{proof}

        \begin{lemma}
        \label{lemma:functors and measure-preserving maps in the normed case}
            Let $\left( m^{\mathrm{op}},T \right) \colon (X,V) \rightarrow (Y,W)$ be a morphism in the category $\opMeas \times \NVect$, where $m \colon Y \rightarrow X$ is a measure-preserving map. 
            Let $(B,V)$ be an object in $\opMeas \times \NVect$, where $B \subseteq X$ is a measurable subset. 
            Finally, let $(F,c) \in \NVp$. 
            Then, we have that:
            \begin{equation*}
                F_{\left( m^{\mathrm{op}},T \right)} \circ c_{(B,V)}^{(X,V)} = c_{\left( m^{-1}(B),W \right)}^{(Y,W)} \circ T.
            \end{equation*}
        \end{lemma}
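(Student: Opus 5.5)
The identity should follow by combining the Beck--Chevalley square of Lemma \ref{lemma:Beck-Chevalley conditions in the normed case} with axioms \ref{ax:compatibility with measure-preserving maps in the normed case} and \ref{ax:compatibility with linear contractive maps 1}. By the notation of Remark \ref{rem:notation for the maps c 1}, the left-hand side is
\begin{equation*}
    F_{\left( m^{\mathrm{op}},T \right)} \circ F_{\left( \left( B,\id_B \right)^{\mathrm{op}},\id_V \right)} \circ c_{(B,V)},
\end{equation*}
where $B \hookrightarrow X$ is the inclusion, whose associated $\Meas$-morphism is $(B,\id_B) \colon X \rightarrow B$. The first step is to apply Lemma \ref{lemma:Beck-Chevalley conditions in the normed case} to exactly the data $m$, $T$ and $B$ of the statement. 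This rewrites the composite of the first two maps as
\begin{equation*}
    F_{\left( \left( m^{-1}(B),\id_{m^{-1}(B)} \right)^{\mathrm{op}},\id_W \right)} \circ F_{\left( (m')^{\mathrm{op}},T \right)},
\end{equation*}
with $m' \colon m^{-1}(B) \rightarrow B$ the restriction of $m$.

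Next I treat $F_{((m')^{\mathrm{op}},T)} \circ c_{(B,V)}$. I first check that $m'$ is measure-preserving: for measurable $A \subseteq B$ we have $(m')^{-1}(A) = m^{-1}(A)$, and $m^{-1}(B)$ carries the subspace measure, so $\mu(m^{-1}(A)) = \mu_X(A)$. Using Remark \ref{rem:morphisms and functors in product categories in the normed case}, I decompose
\begin{equation*}
    F_{((m')^{\mathrm{op}},T)} = F_{(\id_{m^{-1}(B)},T)} \circ F_{((m')^{\mathrm{op}},\id_V)}.
\end{equation*}
Axiom \ref{ax:compatibility with measure-preserving maps in the normed case} then gives $F_{((m')^{\mathrm{op}},\id_V)} \circ c_{(B,V)} = c_{(m^{-1}(B),V)}$. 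Axiom \ref{ax:compatibility with linear contractive maps 1} then gives $F_{(\id_{m^{-1}(B)},T)} \circ c_{(m^{-1}(B),V)} = c_{(m^{-1}(B),W)} \circ T$.

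Substituting back, the left-hand side becomes
\begin{equation*}
    F_{\left( \left( m^{-1}(B),\id_{m^{-1}(B)} \right)^{\mathrm{op}},\id_W \right)} \circ c_{(m^{-1}(B),W)} \circ T.
\end{equation*}
Here the morphism is the one associated to the inclusion embedding $m^{-1}(B) \hookrightarrow Y$, so this expression is by definition $c_{(m^{-1}(B),W)}^{(Y,W)} \circ T$, as claimed.

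I expect the only delicate point to be bookkeeping. One must verify that the vertical arrows in the square of Lemma \ref{lemma:Beck-Chevalley conditions in the normed case} really are the morphisms $((\iota(X),\iota^{-1})^{\mathrm{op}},\id)$ attached to the inclusion embeddings. For an inclusion, $\iota^{-1}$ is the identity on the image, so these coincide with the arrows used in the superscript notation. One must also confirm that $m'$ is well defined and measure-preserving, as checked above.
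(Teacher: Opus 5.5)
Your proposal is correct and follows the paper's proof step for step. You expand $c_{(B,V)}^{(X,V)}$, apply Lemma~\ref{lemma:Beck-Chevalley conditions in the normed case}, and split $F_{((m')^{\mathrm{op}},T)}$ via Remark~\ref{rem:morphisms and functors in product categories in the normed case}; you then apply axioms \ref{ax:compatibility with measure-preserving maps in the normed case} and \ref{ax:compatibility with linear contractive maps 1} and unwind the notation. Your explicit check that the restriction $m'$ is measure-preserving, which is needed to invoke axiom \ref{ax:compatibility with measure-preserving maps in the normed case}, is a detail the paper leaves implicit.
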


        \begin{proof}
            To obtain the desired result, it suffices to expand and manipulate the left-hand side of the equation above as follows:
            \begin{align}
                F_{\left( m^\mathrm{op},T \right)} \circ c_{(B,V)}^{(X,V)} &= F_{\left( m^\mathrm{op},T \right)} \circ F_{\left( \left( B,\id_B \right)^{\mathrm{op}},\id _V \right)} \circ c_{(B,V)} \notag \\
                &= F_{( ( m^{-1}(B),\id_{m^{-1}(B)} )^{\mathrm{op}},\id_W )} \circ F_{\left( (m')^{\mathrm{op}},T \right)} \circ c_{(B,V)} \label{eq:commutativity of the Beck-Chevalley diagram in the normed case} \\
                &= F_{( ( m^{-1}(B),\id_{m^{-1}(B)} )^{\mathrm{op}},\id_W )} \circ F_{(\id_{m^{-1}(B)},T)} \circ F_{\left( (m')^{\mathrm{op}},\id_V \right)} \circ c_{(B,V)} \label{eq:decomposition of product morphisms in the normed case} \\
                &= F_{( ( m^{-1}(B),\id_{m^{-1}(B)} )^{\mathrm{op}},\id_W )} \circ F_{(\id_{m^{-1}(B)},T)} \circ c_{\left( m^{-1}(B),V \right)} \label{eq:application of axiom II in the normed case} \\
                &= F_{( ( m^{-1}(B),\id_{m^{-1}(B)} )^{\mathrm{op}},\id_W )} \circ c_{\left( m^{-1}(B),W \right)} \circ T \label{eq:application of axiom O in the normed case} \\
                &= c_{\left( m^{-1}(B),W \right)}^{(Y,W)} \circ T \notag.
            \end{align}
            The first equality is simply writing explicitly $c_{(B,V)}^{(X,V)}$.
            Equation \eqref{eq:commutativity of the Beck-Chevalley diagram in the normed case} follows from Lemma \ref{lemma:Beck-Chevalley conditions in the normed case}.
            Equation \eqref{eq:decomposition of product morphisms in the normed case} follows applying Remark \ref{rem:morphisms and functors in product categories in the normed case}.
            Equation \eqref{eq:application of axiom II in the normed case} follows from Axiom \ref{ax:compatibility with measure-preserving maps in the normed case}.
            Equation \eqref{eq:application of axiom O in the normed case} follows from Axiom \ref{ax:compatibility with linear contractive maps 1}.
            Finally, the last equation is simply a matter of notation.  
        \end{proof}
        
        \begin{lemma}
        \label{lemma:compatibility of c with embeddings in the normed case}
            Let $(F,c) \in \NVembp$ and let $(\iota,\id_V) \colon (X,V) \rightarrow (Y,V)$ be a morphism in the category $\embMeas \times \NVect$. 
            Then, the following statements hold true:
            \begin{itemize}
                \item[(i)] for $p \in [1,\infty)$, we have that
                \begin{equation*}
                    \norm{c_{(X,V)}^{(Y,V)}}_{\mathcal{L}(V,F(Y,V))} \leq \mu_Y\left( \iota(X) \right)^\frac{1}{p}; 
                \end{equation*}
                \item[(ii)] for $p = \infty$, we have that 
                \begin{equation*} 
                    \begin{cases}
                        \norm{c_{(X,V)}^{(Y,V)}}_{\mathcal{L}(V,F(Y,V))} = 0   & \text{if} ~ \mu_Y\left( \iota(X) \right) = 0, \\
                        \norm{c_{(X,V)}^{(Y,V)}}_{\mathcal{L}(V,F(Y,V))} \leq 1   & \text{otherwise.}
                    \end{cases}
                \end{equation*}
            \end{itemize}
            In particular, $c_{(X,V)}^{(Y,V)} = 0$ whenever $X$ is a measure-zero subspace of $Y$. 
        \end{lemma}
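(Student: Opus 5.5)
The plan is to factor $c_{(X,V)}^{(Y,V)} = F_{(\iota,\id_V)} \circ c_{(X,V)}$ and to use two facts. First, $F_{(\iota,\id_V)}$ is a morphism in $\NVect$, since $F$ takes values in $\NVect$, so it is contractive and has operator norm at most $1$. Second, axiom \ref{ax:boundedness of c} bounds the norm of $c_{(X,V)}$ by $\mu_X(X)^{\frac{1}{p}}$. Submultiplicativity of operator norms then gives
\begin{equation*}
    \norm{c_{(X,V)}^{(Y,V)}}_{\mathcal{L}(V,F(Y,V))} \leq \norm{F_{(\iota,\id_V)}}\,\norm{c_{(X,V)}}_{\mathcal{L}(V,F(X,V))} \leq \mu_X(X)^{\frac{1}{p}}.
\end{equation*}
Because $\iota$ is an embedding (Definition \ref{def:embedding}), we have $\mu_X(X) = \mu_Y(\iota(X))$. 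This proves (i) for $p \in [1,\infty)$.

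For (ii), with $p=\infty$, the same chain of inequalities applies, now reading $\mu_X(X)^{\frac{1}{p}}$ according to the convention stated in axiom \ref{ax:boundedness of c}. If $\mu_Y(\iota(X)) = \mu_X(X) = 0$, the bound is $0$, so the operator norm is $0$. Otherwise the bound is $1$. In the first case the operator norm is nonnegative and bounded by $0$, which gives equality.

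The final claim follows from these bounds. If $X$ is a measure-zero subspace of $Y$, then $\mu_Y(\iota(X)) = 0$. In case (i) the norm is bounded by $0^{1/p} = 0$, and in case (ii) it equals $0$ by the first alternative. A bounded linear map with operator norm $0$ is the zero map, so $c_{(X,V)}^{(Y,V)} = 0$.

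There is no real obstacle here. The only points needing care are recalling that morphisms of $\NVect$ are contractive, which is what gives the norm bound on $F_{(\iota,\id_V)}$, and handling the $p=\infty$ convention, where the equality with $0$ rather than merely an inequality has to be stated explicitly.
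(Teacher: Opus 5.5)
Your proposal is correct and follows the paper's proof: factor $c_{(X,V)}^{(Y,V)} = F_{(\iota,\id_V)} \circ c_{(X,V)}$, use that $F_{(\iota,\id_V)}$ is contractive, apply axiom \ref{ax:boundedness of c}, and use $\mu_X(X) = \mu_Y(\iota(X))$. You also work out the $p=\infty$ convention and the final zero-map claim explicitly, which the paper leaves implicit.
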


        \begin{proof}
            First, we observe that
            \begin{equation*}
                \begin{split}
                    \norm{c_{(X,V)}^{(Y,V)}}_{\mathcal{L}(V,F(Y,V))} &= \norm{F_{(\iota,\id_V)} \circ c_{(X,V)}}_{\mathcal{L}(V,F(Y,V))} \\
                    &\leq \norm{c_{(X,V)}}_{\mathcal{L}(V,F(X,V))},
                \end{split}
            \end{equation*}
            because $F_{(\iota,\id_V)}$ is a contractive map. 
            The statement then follows directly from axiom \ref{ax:boundedness of c} and from the fact that $\mu_X(X) = \mu_Y\left( \iota(X) \right)$, since $\iota$ is an embedding. 
        \end{proof}
        
        \begin{theorem} 
        \label{thm:universal properties of simple functions with values in normed vector spaces}
            Let $p \in [1,\infty]$. 
            The bifunctors of the normed vector spaces of equivalence classes of simple functions with values in normed vector spaces, under equality almost everywhere, have the following universal properties:
            \begin{itemize}
                \item[(i)] $(S^p,\sigma)$ is the initial object of the category $\NVembp$;
                \item[(ii)] $(S^p,\sigma)$ is the initial object of the category $\NVp$.
            \end{itemize}
        \end{theorem}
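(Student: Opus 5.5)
The plan is to follow Leinster's strategy: construct the candidate morphism explicitly on simple functions, then prove existence and uniqueness. Fix $(F,c) \in \NVembp$. Every $f \in S^p(X,V)$ has a representative $f = \sum_{k=1}^n v_k \chi_{A_k}$ with $A_1,\dots,A_n \subseteq X$ pairwise disjoint measurable sets and $v_k \in V$. Since $\sigma_{(A_k,V)}^{(X,V)}(v_k) = v_k\chi_{A_k}$, any morphism $\psi \colon (S^p,\sigma) \to (F,c)$ must be natural with respect to the inclusions $(A_k \hookrightarrow X,\id_V)$ and satisfy $\psi \circ \sigma = c$. It is therefore forced to be
\begin{equation*}
    \psi_{(X,V)}\Big(\sum_{k} v_k\chi_{A_k}\Big) = \sum_{k} c_{(A_k,V)}^{(X,V)}(v_k).
\end{equation*}
This settles uniqueness at once, for both (i) and (ii).

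For existence, the first task is well-definedness. The value must not depend on the disjoint representation, nor on the choice of representative up to null sets. Any two disjoint representations admit a common refinement, and Lemma \ref{lemma:functors and pairwise disjoint embeddings in the normed case} gives $c_{(A,V)}^{(X,V)} = \sum_j c_{(A\cap B_j,V)}^{(X,V)}$ for a partition $A = \bigsqcup_j (A\cap B_j)$; applying it to both sides reduces the two expressions to a common one. Changing the sets on a null set only alters terms of the form $c_{(N,V)}^{(X,V)}$ with $\mu(N)=0$, and these vanish by Lemma \ref{lemma:compatibility of c with embeddings in the normed case}. Linearity then follows by passing to a common refinement.

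Next comes naturality. For $(\iota,T)$, decompose it as in Remark \ref{rem:morphisms and functors in product categories in the normed case}. Naturality with respect to $(\iota,\id_V)$ follows from functoriality, since $F_{(\iota,\id)}\circ c_{(A,V)}^{(X,V)} = c_{(A,V)}^{(Y,V)}$ via the composite embedding. Naturality with respect to $(\id_X,T)$ follows from axiom \ref{ax:compatibility with linear contractive maps} combined with functoriality, because $F_{(\id_X,T)}\circ F_{(A\hookrightarrow X,\id_V)} = F_{(A\hookrightarrow X,\id_W)}\circ F_{(\id_A,T)}$. The compatibility $\psi\circ\sigma = c$ holds by construction, taking $n=1$ and $A_1 = X$.

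The main obstacle is showing that $\psi_{(X,V)}$ is contractive, as required of morphisms in $\NVect$. For $p<\infty$, I would first extend axiom \ref{ax:norm inequality wrt complementary embeddings} by induction to finite partitions $X = A_1\sqcup\dots\sqcup A_n$ (adding the complementary piece if the $A_k$ do not cover $X$). This gives
\begin{equation*}
    \Big\|\sum_k F_{(A_k\hookrightarrow X,\id_V)}u_k\Big\| \le \Big(\sum_k \|u_k\|^p\Big)^{1/p}.
\end{equation*}
Setting $u_k = c_{(A_k,V)}(v_k)$ and using axiom \ref{ax:boundedness of c}, namely $\|u_k\| \le \mu(A_k)^{1/p}\|v_k\|$, yields $\|\psi f\| \le \|f\|_{S^p}$. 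The induction step uses associativity of complementary splittings together with functoriality of composed embeddings. For $p=\infty$ the same argument works with maxima: after discarding null $A_k$ (which contribute zero), one gets $\|\psi f\| \le \max_k \|v_k\| = \|f\|_\infty$.

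For (ii), given $(F,c)\in\NVp$, apply (i) to $\mathcal{U}^p(F,c)$ (Remark \ref{rem:forgetful functor from NVp to NVembp}) to obtain the unique candidate $\psi$. Uniqueness follows because $\mathcal U^p$ is faithful and $\mathcal U^p(S^p,\sigma) = (S^p,\sigma)$. It then remains to check naturality with respect to $(m^{\mathrm{op}},\id_V)$ for measure-preserving $m \colon Y\to X$; by Remark \ref{rem:structure of morphisms in Meas}(ii) this suffices. Since $S^p_{(m^{\mathrm{op}},\id)}\big(\sum v_k\chi_{A_k}\big) = \sum v_k\chi_{m^{-1}(A_k)}$, the required identity is exactly Lemma \ref{lemma:functors and measure-preserving maps in the normed case} with $T=\id_V$, summed over $k$.
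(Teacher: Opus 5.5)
Your proposal is correct and follows essentially the same route as the paper's proof. The shared steps are the forced formula $\sum_k c^{(X,V)}_{(A_k,V)}(v_k)$, well-definedness and linearity via Lemma \ref{lemma:functors and pairwise disjoint embeddings in the normed case} and Lemma \ref{lemma:compatibility of c with embeddings in the normed case}, contractivity by iterating axiom \ref{ax:norm inequality wrt complementary embeddings} and then applying axiom \ref{ax:boundedness of c}, and statement (ii) via $\mathcal{U}^p$ together with Lemma \ref{lemma:functors and measure-preserving maps in the normed case}. The differences are cosmetic (common refinements of arbitrary disjoint representations instead of level sets, and handling $T$ by decomposition in (ii)); the one identification you leave implicit in the naturality step, that $c_{(\iota(A),V)}^{(Y,V)}$ along the inclusion equals the pushforward of $c_{(A,V)}$ along $\iota|_A$, is the case $n=1$ of Lemma \ref{lemma:functors and pairwise disjoint embeddings in the normed case}, which the paper also uses without comment.
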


        \begin{proof} 
            First, we consider statement (i). 
            Let $(F,c) \in \NVembp$. 
            We want to show that there exists a unique morphism $\eta \colon (S^p,\sigma) \rightarrow (F,c)$ in the category $\NVembp$.

            \emph{Existence.}\hspace{.5em} We observe that, given an object $(X,V) \in \embMeas \times \NVect$, for any element $f \in S^p(X,V)$ we can choose a representative (denoted, by abuse of notation, with the same symbol $f$) and write:
            \begin{equation}
            \label{eq:expression of simple functions using sigma}
                f = \sum_{v \in V} \sigma_{\left( f^{-1}(v),V \right)}^{(X,V)}(v),
            \end{equation}
            where the sum over $v \in V$ is finite because $f$ is a simple function and because, according to Lemma \ref{lemma:compatibility of c with embeddings in the normed case}, $\sigma_{\left( f^{-1}(v),V \right)}^{(X,V)} = 0$ whenever $f^{-1}(v) \subset X$ is a measure-zero subset.
            We define, then:
            \begin{equation}
            \label{eq:definition of eta}
                \eta_{(X,V)}(f) = \sum_{v \in V} c_{(f^{-1}(v),V)}^{(X,V)}(v) \in F(X,V). 
            \end{equation}
            It is routine to check, combining Lemma \ref{lemma:functors and pairwise disjoint embeddings in the normed case} with Lemma \ref{lemma:compatibility of c with embeddings in the normed case}, that the definition of $\eta_{(X,V)}(f)$ does not depend on the choice of a representative for $f$.

            To show that the map $\eta_{(X,V)} \colon S^p(X,V) \rightarrow F(X,V)$ just defined is linear, let $f,g \in S^p(X,V)$. By formula \eqref{eq:definition of eta}, we have:
            \begin{equation}
            \label{eq:linearity of the morphism from Sp to F}
                \eta_{(X,V)}(f + g) = \sum_{v \in V} c_{\left( (f + g)^{-1}(v),V \right)}^{(X,V)}(v). 
            \end{equation}
            Up to measure-zero subsets of $X$, we can write the following decomposition: 
            \begin{equation*}
                (f + g)^{-1}(v) = \bigsqcup_{\substack{w,z \in V \\ w + z = v}} f^{-1}(w) \cap g^{-1}(z).
            \end{equation*}
            Noting that the family of pairwise disjoint measurable subsets $\left( f^{-1}(w) \cap g^{-1}(z) \right)$, indexed by $w,z \in V$ with $w + z = v$, is finite due to $f$ and $g$ being simple functions, we can apply Lemma \ref{lemma:functors and pairwise disjoint embeddings in the normed case} and obtain:
            \begin{equation*}
                c_{\left( (f + g)^{-1}(v),V \right)}^{(X,V)} = \sum_{\substack{w,z \in V \\ w + z = v}} c_{\left( f^{-1}(w) \cap g^{-1}(z),V \right)}^{(X,V)}.
            \end{equation*}
            Substituting the last expression into formula \eqref{eq:linearity of the morphism from Sp to F}, we get:
            \begin{align}
                \eta_{(X,V)}(f &+ g) = \sum_{v \in V} \sum_{\substack{w,z\in V \\ w + z = v}} c_{\left( f^{-1}(w)\cap g^{-1}(z),V \right)}^{(X,V)}(w + z) \notag \\
                &= \sum_{v \in V} \sum_{\substack{w,z \in V \\ w + z = v}} c_{\left( f^{-1}(w) \cap g^{-1}(z),V \right)}^{(X,V)}(w) + \sum_{v \in V} \sum_{\substack{w,z \in V \\ w + z = v}} c_{\left( f^{-1}(w)\cap g^{-1}(z),V \right)}^{(X,V)}(z) \label{eq:use linearity of the map c} \\
                &= \sum_{w \in V} \sum_{v \in V} c_{\left( f^{-1}(w) \cap g^{-1}(v - w),V \right)}^{(X,V)}(w) + \sum_{z \in V} \sum_{v \in V} c_{\left( f^{-1}(v-z) \cap g^{-1}(z),V \right)}^{(X,V)}(z) \label{eq:rename the indices in the summations} \\
                &= \sum_{w \in V} c_{\left( f^{-1}(w),V \right)}^{(X,V)}(w) + \sum_{z \in V} c_{\left( g^{-1}(z),V \right)}^{(X,V)}(z) \label{eq:sum up one of the summations} \\
                &= \eta_{(X,V)}(f) + \eta_{(X,V)}(g), \notag
            \end{align}
            where equation \eqref{eq:use linearity of the map c} follows from linearity of the map $c_{\left( f^{-1}(w)\cap g^{-1}(z),V \right)}^{(X,V)}$, equation \eqref{eq:rename the indices in the summations} is simply a rearrangement of the summation indices and equation \eqref{eq:sum up one of the summations} is a consequence of Lemma \ref{lemma:functors and pairwise disjoint embeddings in the normed case} applied to the following decompositions
            \begin{equation*}
                f^{-1}(w) = \bigsqcup_{v \in V} f^{-1}(w) \cap g^{-1}(v - w), \quad \qquad g^{-1}(z) = \bigsqcup_{v \in V} f^{-1}(v-z)\cap g^{-1}(z),
            \end{equation*}
            which hold up to measure-zero subsets of $X$.
            Linearity of $\eta_{(X,V)}$ with respect to scalar multiplication follows from the basic observation that 
            \begin{equation*}
                \lambda f = \sum_{v \in V} \sigma_{\left( (\lambda f)^{-1}(v),V \right)}^{(X,V)}(v) = \sum_{v \in V} \sigma_{\left( f^{-1}(v),V \right)}^{(X,V)}(\lambda v),
            \end{equation*}
            for any $\lambda \in \K$, and from linearity of the maps $c_{\left( f^{-1}(v),V \right)}^{(X,V)}$. \\
            In addition, we need to show that, for any object $(X,V) \in \embMeas \times \NVect$, the map $\eta_{(X,V)} \colon S^p(X,V) \rightarrow F(X,V)$ is contractive.
            Indeed, for $p \in [1,\infty)$ and for any $f \in S^p(X,V)$, this can be seen as follows:
            \begin{align}
                \norm{\eta_{(X,V)}(f)}_{F(X,V)} &= \norm{\sum_{v \in V} c_{\left( f^{-1}(v),V \right)}^{(X,V)}(v)}_{F(X,V)} \notag \\
                &\leq \left( \sum_{v \in V} \norm{c_{\left( f^{-1}(v),V \right)}(v)}^p_{F\left( f^{-1}(v),V \right)} \right)^{\frac{1}{p}} \label{eq:apply norm inequality wrt complementary embeddings in the normed case} \\
                &\leq \left( \sum_{v \in V} \norm{v}_V^p \mu_{f^{-1}(v)}\left( f^{-1}(v) \right) \right)^{\frac{1}{p}} \label{eq:apply axiom III} \\
                &= \left( \sum_{v \in V} \norm{v}^p \mu_X\left( f^{-1}(v) \right) \right)^{\frac{1}{p}} \label{eq:apply that embeddings preserve measure} \\
                &= \norm{f}_{S^p(X,V)}, \notag
            \end{align}
            where the first equation corresponds to the definition of $\eta_{(X,V)}(f)$ according to formula \eqref{eq:definition of eta}, inequality \eqref{eq:apply norm inequality wrt complementary embeddings in the normed case} follows by repeatedly applying axiom \ref{ax:norm inequality wrt complementary embeddings}, inequality \eqref{eq:apply axiom III} follows from axiom \ref{ax:boundedness of c} and equality \eqref{eq:apply that embeddings preserve measure} is a consequence of the fact that the embedding $f^{-1}(v) \hookrightarrow X$ preserves measures, namely, $\mu_{f^{-1}(v)}\left( f^{-1}(v) \right) = \mu_X\left( f^{-1}(v) \right)$. \\
            For $p = \infty$, let us first assume that $\mu_X(X) \neq 0$.
            We then have:
            \begin{align}
                \norm{\eta_{(X,V)}(f)}_{F(X,V)} &= \norm{\sum_{v \in V} c_{\left( f^{-1}(v),V \right)}^{(X,V)}(v)}_{F(X,V)} \notag \\
                &\leq \max_{v \in V}\Set{\norm{c_{\left( f^{-1}(v),V \right)}(v)}_{F\left( f^{-1}(v),V \right)}} \label{eq:apply axiom IV repeatedly} \\
                &\leq \max_{v \in V}\Set{\norm{v}_V \norm{c_{\left( f^{-1}(v),V \right)}}_{\mathcal{L}\left( V,F\left( f^{-1}(v),V \right) \right)}} \label{eq:use the operator norm for c} \\
                &\leq \max_{\substack{v \in V \\ \mu_X\left( f^{-1}(v) \right) \neq 0}} \Set{\norm{v}_V} \label{eq:use axiom III} \\
                &= \norm{f}_{S^\infty(X,V)}, \notag
            \end{align}
            where inequality \eqref{eq:apply axiom IV repeatedly} follows by repeatedly applying axiom \ref{ax:norm inequality wrt complementary embeddings}, inequality \eqref{eq:use the operator norm for c} follows by using the operator norms of the maps $c_{\left( f^{-1}(v),V \right)}$ and, finally, inequality \eqref{eq:use axiom III} follows from axiom \ref{ax:boundedness of c}.
            In the case $\mu_X(X) = 0$, we directly obtain:
            \begin{align}
                \norm{\eta_{(X,V)}(f)}_{F(X,V)} &= \norm{\sum_{v \in V} c_{\left( f^{-1}(v),V \right)}^{(X,V)}(v)}_{F(X,V)} \notag \\
                &\leq \sum_{v \in V} \norm{v}_V \norm{c_{\left( f^{-1}(v),V \right)}^{(X,V)}}_{\mathcal{L}(V,F(X,V))} \notag \\
                &= 0 \notag \\
                &= \norm{f}_{S^\infty(X,V)}, \notag
            \end{align} 
            where $\norm{c_{\left( f^{-1}(v),V \right)}^{(X,V)}}_{\mathcal{L}(V,F(X,V))} = 0$, for all $v \in V$, due to Lemma \ref{lemma:compatibility of c with embeddings in the normed case}.

            Next, we show that $\eta$ defines a natural transformation from the bifunctor $S^p$ to the bifunctor $F$. 
            In other words, we have to show that, for any morphism $(\iota,T) \colon (X,V) \rightarrow (Y,W)$ in $\embMeas \times \NVect$, the diagram
            \begin{equation*}
                \begin{tikzcd}
                    S^p(X,V) \ar[rr, "S^p_{(\iota,T)}"] \ar[dd, "\eta_{(X,V)}"']  &   & S^p(Y,W) \ar[dd, "\eta_{(Y,W)}"] \\
                        &   & \\
                    F(X,V) \ar[rr, "F_{(\iota,T)}"]   &   & F(Y,W)
                \end{tikzcd}
            \end{equation*}
            commutes in the category $\NVect$.
            By formula \eqref{eq:expression of simple functions using sigma}, it suffices to check commutativity when composing the arrows of the diagram with the maps $\sigma_{(B,V)}^{(X,V)}$, where $B \subseteq X$ is any measurable subset. 
            On the one hand, we have:
            \begin{align}
                F_{(\iota,T)} \circ \eta_{(X,V)} \circ \sigma_{(B,V)}^{(X,V)} &= F_{(\iota,T)} \circ c_{(B,V)}^{(X,V)} \notag \\
                &= F_{(\iota,\id_W)} \circ F_{(\id_X,T)} \circ c_{(B,V)}^{(X,V)} \label{eq:separate the two components of the morphism} \\
                &= F_{(\iota,\id_W)} \circ c_{(B,V)}^{(X,V)} \circ T \label{eq:apply axiom O} \\
                &= c_{(B,V)}^{(Y,W)} \circ T, \notag
            \end{align}
            where the first equality follows from the definition of $\eta_{(X,V)}$, see formula \eqref{eq:definition of eta}, in equality \eqref{eq:separate the two components of the morphism} we used Remark \ref{rem:morphisms and functors in product categories in the normed case}, equation \eqref{eq:apply axiom O} follows from axiom \ref{ax:compatibility with linear contractive maps} and the last equality is just a matter of notation.
            On the other hand, using the same arguments, we obtain:
            \begin{align*}
                \eta_{(Y,W)} \circ S^p_{(\iota,T)} \circ \sigma_{(B,V)}^{(X,V)} &= \eta_{(Y,W)} \circ S^p_{(\iota,\id_W)} \circ S^p_{(\id_X,T)} \circ \sigma_{(B,V)}^{(X,V)} \\
                &= \eta_{(Y,W)} \circ S^p_{(\iota,\id_W)} \circ \sigma_{(B,W)}^{(X,W)} \circ T \\
                &= \eta_{(Y,W)} \circ \sigma_{(B,W)}^{(Y,W)} \circ T \\
                &= c_{(B,W)}^{(Y,W)} \circ T.
            \end{align*}
            We have thus shown that $\eta$ defines a natural transformation $S^p \Rightarrow F$ and, moreover, for all $(X,V) \in \embMeas \times \NVect$, we have that $\eta_{(X,V)} \circ \sigma_{(X,V)} = c_{(X,V)}$ directly from formula \eqref{eq:definition of eta}. 
            Hence $\eta \colon (S^p,\sigma) \rightarrow (F,c)$ is a morphism in the category $\NVembp$.

            \emph{Uniqueness.}\hspace{.5em} Assume the existence of a morphism $\eta \colon (S^p,\sigma) \rightarrow (F,c)$ in the category $\NVembp$. 
            For any couple of objects $(X,V),(Y,V) \in \embMeas \times \NVect$, where $X \subseteq Y$ is any measurable subset, naturality of $\eta$ with respect to the morphism $(\iota,\id_V) \colon (X,V) \rightarrow (Y,V)$, with $\iota \colon X \rightarrow Y$ denoting the embedding of $X$ into $Y$, gives the following commutative diagram in $\NVect$
            \begin{equation*}
                \begin{tikzcd}
                    S^p(X,V) \ar[rr, "S^p_{(\iota,\id_V)}"] \ar[dd, "\eta_{(X,V)}"']  &   & S^p(Y,V) \ar[dd, "\eta_{(Y,V)}"] \\
                        &   & \\
                    F(X,V) \ar[rr, "F_{(\iota,\id_V)}"]   &   & F(Y,V)
                \end{tikzcd}
            \end{equation*}
            Composing the arrows of this diagram with the map $\sigma_{(X,V)} \colon V \rightarrow S^p(X,V)$ and exploiting the commutativity, we obtain:
            \begin{align}
                \eta_{(Y,V)} \circ \sigma_{(X,V)}^{(Y,V)} &= \eta_{(Y,V)} \circ S^p_{(\iota,\id_V)} \circ \sigma_{(X,V)} \notag \\
                &= F_{(\iota,\id_V)} \circ \eta_{(X,V)} \circ \sigma_{(X,V)} \notag \\
                &= F_{(\iota,\id_V)} \circ c_{(X,V)} \label{eq:apply the definition of morphism in NVembp} \\
                &= c_{(X,V)}^{(Y,V)}. \notag
            \end{align}
            This implies that $\eta_{(Y,V)}$ is uniquely determined on the image of the maps $\sigma_{(X,V)}^{(Y,V)}$, for any measurable subset $X \subseteq Y$. 
            Hence, by formula \eqref{eq:expression of simple functions using sigma} and linearity, $\eta_{(Y,V)}$ is uniquely determined on all of $S^p(Y,V)$. 
            This completes the proof of statement (i).

            To prove statement (ii), let $(F,c) \in \NVp$. 
            According to Remark \ref{rem:forgetful functor from NVp to NVembp}, we can regard $(F,c)$ as an object in the category $\NVembp$ and, from statement (i), we know that there exists a unique morphism $\eta \colon (S^p,\sigma) \rightarrow (F,c)$ in $\NVembp$. 
            To conclude, it suffices to show that $\eta$ naturally defines a morphism in the category $\NVp$. \\ 
            This amounts to showing that, for any morphism $\left( m^{\mathrm{op}},T \right) \colon (X,V) \rightarrow (Y,W) $ in $\opMeas \times \NVect$, where $m \colon Y \rightarrow X$ is a measure-preserving map, the diagram
            \begin{equation*}
                \begin{tikzcd}
                    S^p(X,V) \ar[rr, "S^p_{\left( m^{\mathrm{op}},T \right)}"] \ar[dd, "\eta_{(X,V)}"']  &   & S^p(Y,W) \ar[dd, "\eta_{(Y,W)}"] \\
                        &   & \\
                    F(X,V) \ar[rr, "F_{\left( m^{\mathrm{op}},T \right)}"]    &   & F(Y,W)
                \end{tikzcd}
            \end{equation*}
            commutes in the category $\NVect$.
            By formula \eqref{eq:expression of simple functions using sigma}, it suffices to check commutativity when composing the arrows of the diagram with the maps $\sigma_{(B,V)}^{(X,V)}$, where $B \subseteq X$ is any measurable subset. 
            Using Lemma \ref{lemma:functors and measure-preserving maps in the normed case}, we have, on the one hand:
            \begin{align*}
                \eta_{(Y,W)} \circ S^p_{\left( m^{\mathrm{op}},T \right) }\circ \sigma_{(B,V)}^{(X,V)} &= \eta_{(Y,W)} \circ \sigma_{\left( m^{-1}(B),W \right)}^{(Y,W)} \circ T \\
                &= c_{\left( m^{-1}(B),W \right)}^{(Y,W)} \circ T.
            \end{align*}
            Using again Lemma \ref{lemma:functors and measure-preserving maps in the normed case}, we obtain, on the other hand:
            \begin{align*}
                F_{\left( m^{\mathrm{op}},T \right)} \circ \eta_{(X,V)}\circ \sigma_{(B,V)}^{(X,V)} &= F_{\left( m^{\mathrm{op}},T \right)} \circ c_{(B,V)}^{(X,V)} \\
                &= c_{\left( m^{-1}(B),W \right)}^{(Y,W)} \circ T.
            \end{align*}
            This completes the proof of the theorem. 
        \end{proof}

        \begin{remark}
        \label{rem:connection to Leinster}
            We conclude this section with a comment on the relation between our approach and the approach of \cite{Leinster2023}.
            Despite the fact that our framework provides, as a particular case, a universal characterisation of the normed vector spaces of equivalence classes of $\K$-valued simple functions, under equality almost everywhere, there does not appear to be any simple relation between the categories $\NVembp$ or $\NVp$ and the categories $\mathcal{N}_{\mathrm{emb}}^p \subset \NVect^{\embMeas}$ or $\mathcal{N}^p \subset \NVect^{\opMeas}$, introduced in \cite[\S3]{Leinster2023}.  
        \end{remark}

\section{From simple functions to \texorpdfstring{$L^p$}{Lp} Bochner integrable functions}
\label{sec:completion}

    We consider now the second step of the characterisation of the Banach spaces of equivalence classes of $L^p$ Bochner integrable functions, under equality almost everywhere.
    Namely, we discuss how to encode the process of completing the normed vector spaces of equivalence classes of simple functions with values in normed vector spaces, under equality almost everywhere, in our categorical framework.
    
    \subsection{Basic definitions, examples and remarks}
    \label{sec:basic definitions examples and remarks 2}

        Once again, we begin by introducing the basic definitions needed in order to spell out the appropriate categorical framework.
        In addition, we collect a number of useful remarks about some non-trivial relations between the various categories involved.
        
        \begin{definition}
            We denote by $\Ban$ the category whose objects are Banach spaces over the field $\K$ (either $\R$ or $\C$) and whose morphisms are linear contractive maps.
            We also introduce the category $\BAN$, whose objects are Banach spaces over the field $\K$ (either $\R$ or $\C$) and whose morphisms are bounded linear maps.
        \end{definition}

        \begin{remark}
            Clearly, $\Ban$ is a subcategory of $\BAN$.   
        \end{remark}

        \begin{remark}
        \label{rem:relating Ban and NVect}
            We point out some basic facts relating the category $\BAN$ to the category $\NVECT$, which will be useful in the sequel.
            First, there is an obvious faithful and injective-on-objects functor $\mathcal{I} \colon \BAN \hookrightarrow \NVECT$ which forgets the completeness of Banach spaces.
            The restriction of this functor to the subcategory $\Ban \subset \BAN$ naturally yields a faithful and injective-on-objects functor denoted, by abuse of notation, with the same symbol $\mathcal{I} \colon \Ban \hookrightarrow \NVect$. \\
            The functor $\mathcal{I}$ has a left adjoint given by the functor $\overline{\cdot} \colon \NVECT \rightarrow \BAN$, which acts on normed vector spaces by taking their completion and assigns to every bounded linear map between normed vector spaces the extension by continuity to the completion of its domain and codomain.
            Since extension by continuity preserves the operator norm of bounded linear maps, the restriction of the completion functor to the subcategory $\NVect \subset \NVECT$ yields a functor denoted, by abuse of notation, with the same symbol $\overline{\cdot} \colon \NVect \rightarrow \Ban$, which is the left adjoint to the functor $\mathcal{I} \colon \Ban \hookrightarrow \NVect$. \\
            Secondly, using the functor $\mathcal{I}$, we can define two faithful and injective-on-objects bifunctors which, by abuse of notation, we denote with the same symbol: 
            \begin{equation*}
                \begin{split}
                    (\id,\mathcal{I}) &\colon \embMeas \times \BAN \hookrightarrow \embMeas \times \NVECT, \\
                    (\id,\mathcal{I}) &\colon \opMeas \times \BAN \hookrightarrow \opMeas \times \NVECT.
                \end{split}
            \end{equation*}
            Clearly, these two bifunctors admit left adjoints given, respectively, by:
            \begin{equation*}
                \begin{split}
                    (\id,\overline{\cdot}) &\colon \embMeas \times \NVECT \rightarrow \embMeas \times \BAN, \\
                    (\id,\overline{\cdot}) &\colon \opMeas \times \NVECT \rightarrow \opMeas \times \BAN.
                \end{split}
            \end{equation*}
            Finally, all the bifunctors above can be restricted, respectively, to the subcategories $\embMeas \times \Ban$, $\opMeas \times \Ban$, $\embMeas \times \NVect$ and $\opMeas \times \NVect$ to give corresponding pairs of adjoint bifunctors.
        \end{remark}
        
        \begin{definition} 
        \label{def:Bemb}
            Let $p \in [1,\infty]$. 
            We denote by $\Bembp$ the category whose objects are pairs $(F,c)$, where $F \colon \embMeas \times \Ban \rightarrow \Ban$ is a bifunctor and $c$ denotes the assignment of a bounded linear (i.e.~continuous) map $c_{(X,V)} \colon V \rightarrow F(X,V)$ to every object $(X,V) \in \embMeas \times \Ban$. 
            In addition, we require axioms \ref{ax:compatibility with linear contractive maps}, \ref{ax:compatibility with complementary embeddings in the normed case}, \ref{ax:boundedness of c} and \ref{ax:norm inequality wrt complementary embeddings} from Definition \ref{def:NV_embp} to hold. \\
            A morphism $\psi \colon (F,c) \rightarrow (G,d)$ in the category $\Bembp$ is a natural transformation $\psi \colon F \Rightarrow G$ such that $\psi_{(X,V)} \circ c_{(X,V)} = d_{(X,V)}$, for all $(X,V) \in \embMeas \times \Ban$.
        \end{definition}

        \begin{example}
        \label{ex:Lpemb functor}
            The fundamental example of an object in the category $\Bembp$ is the pair $(L^p,\sigma)$.
            Here, $L^p \colon \embMeas \times \Ban \rightarrow \Ban$ is the bifunctor which associates, to any object $(X,V) \in \embMeas \times \Ban$, the Banach space $L^p(X,V)$ of equivalence classes of $L^p$ Bochner integrable functions on $X$ with values in $V$, under equality almost everywhere. 
            We recall that the norm on $L^p(X,V)$ is defined, for any $p \in [1,\infty)$ and for any $f \in L^p(X,V)$, as
            \begin{equation}
            \label{eq:Lp norm}
                \norm{f}_{L^p(X,V)} = \left( \int_X \norm{f(x)}_V^p d\mu_{X}(x) \right)^{\frac{1}{p}},
            \end{equation}
            while, for $p = \infty$ and for any $f \in L^p(X,V)$, it is defined as
            \begin{equation}
            \label{eq:Linfty norm}
                \norm{f}_{L^\infty(X,V)} = \inf \Set{\epsilon \geq 0 | \norm{f(x)}_V \leq \epsilon \quad \text{a.e.~on} ~ X}.
            \end{equation}
            The action of the bifunctor $L^p$ on a morphism $(\iota,T) \colon (X,V) \rightarrow (Y,W)$ in the category $\embMeas \times \Ban$ is defined, in an analogous way as in Example \ref{ex:simple functions with values in normed vector spaces 1}, by formula \eqref{eq:action of Sp on emb-morphisms}.
            That $L^p_{(\iota,T)} \colon L^p(X,V) \rightarrow L^p(Y,W)$ is a contractive map can be checked, for any $p \in [1,\infty)$ and for any $f \in L^p(X,V)$, as follows:
            \begin{align}
                \norm{L^p_{(\iota,T)} f}_{L^p(Y,W)} &= \left( \int_{\iota(X)} \norm{(T \circ f \circ \iota^{-1})(y)}_W^p d\mu_{Y}(y) \right)^{\frac{1}{p}} \notag \\
                &= \left( \int_{X} \norm{(T \circ f)(x)}_W^p d\mu_{X}(x) \right)^{\frac{1}{p}} \label{eq:embeddings preserve measures} \\
                &\leq \left( \int_{X} \norm{f(x)}_V^p d\mu_{X}(x) \right)^{\frac{1}{p}} \label{eq:T is a contractive map} \\
                &= \norm{f}_{L^p(X,V)}, \notag
            \end{align}
            where equation \eqref{eq:embeddings preserve measures} follows from the fact that embeddings preserve measures, while inequality \eqref{eq:T is a contractive map} follows from the fact that $T \colon V \rightarrow W$ is a contractive map.
            In the case $p = \infty$, for any $f \in L^\infty(X,V)$, we have:
            \begin{align}
                \norm{L^\infty_{(\iota,T)} f}_{L^\infty(Y,W)} &= \inf \Set{\epsilon \geq 0 | \norm{(T \circ f \circ \iota^{-1})(y)}_W \leq \epsilon \quad \text{a.e.~on} ~ \iota(X)} \notag \\
                &= \inf \Set{\epsilon \geq 0 | \norm{(T \circ f)(x)}_W \leq \epsilon \quad \text{a.e.~on} ~ X} \notag \\
                &\leq \inf \Set{\epsilon \geq 0 | \norm{f(x)}_V \leq \epsilon \quad \text{a.e.~on} ~ X} \label{eq:use again that T is a contractive map} \\
                &= \norm{f}_{L^\infty(X,V)}, \notag
            \end{align}
            where, again, inequality \eqref{eq:use again that T is a contractive map} follows from the fact that $T$ is a contractive map. \\
            For any object $(X,V) \in \embMeas \times \Ban$, the linear map $\sigma_{(X,V)} \colon V \rightarrow L^p(X,V)$ is defined by formula \eqref{eq:definition of sigma}. 
            That $\sigma_{(X,V)}$ is a bounded linear map follows from the same arguments used in Example \ref{ex:simple functions with values in normed vector spaces 1}. \\
            Axioms \eqref{ax:compatibility with linear contractive maps}, \eqref{ax:compatibility with complementary embeddings in the normed case} and \eqref{ax:boundedness of c}, which involve only the assignment $\sigma$, are satisfied for the same reasons as in Example \ref{ex:simple functions with values in normed vector spaces 1}.
            Regarding the validity of axiom \eqref{ax:norm inequality wrt complementary embeddings}, for any $p \in [1,\infty)$ and for any elements $f \in L^p{(Y,V)}$ and $g \in L^p{(Z,V)}$, denoting 
            \begin{equation*}
                h = L^p_{(\alpha,\id_V)}f + L^p_{(\beta,\id_V)}g \in L^p{(X,V)},
            \end{equation*}
            with $Y \xrightarrow{~\alpha~} X \xleftarrow{~\beta~} Z$ any couple of complementary embeddings, we obtain indeed:
            \begin{align}
                \norm{h}_{L^p(X,V)} &= \left( \int_{X} \norm{h(x)}_V^p d\mu_{X}(x) \right)^{\frac{1}{p}} \notag \\
                &= \left( \int_{\alpha(Y)} \norm{h(x)}_V^p d\mu_{X}(x) + \int_{\beta(Z)} \norm{h(x)}_V^p d\mu_{X}(x) \right)^{\frac{1}{p}} \label{eq:use additivity of the Bochner integral} \\
                &= \left( \int_{Y} \norm{f(y)}_V^p d\mu_{Y}(y) + \int_{Z} \norm{g(z)}_V^p d\mu_{Z}(z) \right)^{\frac{1}{p}} \label{eq:use that h corresponds to f on Y and to g on Z and that embeddings preserve measures} \\
                &= \left( \norm{f}_{L^p(Y,V)}^p + \norm{g}_{L^p(Z,V)}^p \right)^{\frac{1}{p}}, \notag
            \end{align}
            where equation \eqref{eq:use additivity of the Bochner integral} follows from the fact that $X = \alpha(Y) \cup \beta(Z)$ and $\alpha(Y) \cap \beta(Z) = \emptyset$ (see Definition \ref{def:complementary embeddings}) and from additivity of the Lebesgue integral, while equation \eqref{eq:use that h corresponds to f on Y and to g on Z and that embeddings preserve measures} follows from the fact that $h$ coincides, respectively, with $f$ on $\alpha(Y)$ and with $g$ on $\beta(Z)$ and from the fact that embeddings preserve measures.
            For $p = \infty$ and for any elements $f \in L^\infty{(Y,V)}$ and $g \in L^\infty{(Z,V)}$, denoting 
            \begin{equation*}
                h = L^\infty_{(\alpha,\id_V)}f + L^\infty_{(\beta,\id_V)}g \in L^\infty{(X,V)},
            \end{equation*}
            with $Y \xrightarrow{~\alpha~} X \xleftarrow{~\beta~} Z$ any couple of complementary embeddings, we obtain:
            \begin{align}
                \norm{h}_{S^\infty(X,V)} &= \inf \Set{\epsilon \geq 0 | \norm{h(x)}_V \leq \epsilon \quad \text{a.e.~on} ~ X} \notag \\
                &= \max \big\{ \inf \Set{\epsilon_1 \geq 0 ~|~ \norm{h(x)}_V \leq \epsilon_1 \quad \text{a.e.~on} ~ \alpha(Y)}, \notag \\
                &\qquad \qquad \inf \Set{\epsilon_2 \geq 0 ~|~ \norm{h(x)}_V \leq \epsilon_2 \quad \text{a.e.~on} ~ \beta(Z)} \big\} \notag \\
                &= \max \big\{ \inf \Set{\epsilon_1 \geq 0 ~|~ \norm{f(y)}_V \leq \epsilon_1 \quad \text{a.e.~on} ~ Y}, \label{eq:use again that h corresponds to f on Y and to g on Z and that embeddings preserve measures} \\
                &\qquad \qquad \inf \Set{\epsilon_2 \geq 0 ~|~ \norm{g(z)}_V \leq \epsilon_2 \quad \text{a.e.~on} ~ Z} \big\} \notag \\
                &= \max \Set{\norm{f}_{S^\infty(Y,V)},\norm{g}_{S^\infty(Z,V)}}, \notag
            \end{align}
            where equation \eqref{eq:use again that h corresponds to f on Y and to g on Z and that embeddings preserve measures} follows from the fact that $h$ coincides, respectively, with $f$ on $\alpha(Y)$ and with $g$ on $\beta(Z)$. 
        \end{example}
        
        \begin{definition}
        \label{def:Bp}
            Let $p \in [1,\infty]$. 
            We denote by $\Bp$ the category whose objects are pairs $(F,c)$, where $F \colon \opMeas \times \Ban \rightarrow \Ban$ is a bifunctor and $c$ denotes the assignment of a bounded linear (i.e.~continuous) map $c_{(X,V)} \colon V \rightarrow F(X,V)$ to every object $(X,V) \in \opMeas \times \Ban$. 
            In addition, we require axioms \ref{ax:compatibility with linear contractive maps 1}, \ref{ax:compatibility with complementary embeddings in the normed case 1}, \ref{ax:compatibility with measure-preserving maps in the normed case}, \ref{ax:boundedness of c 1} and \ref{ax:norm inequality wrt complementary embeddings 1} from Definition \ref{def:NVp} to hold. \\
            A morphism $\psi \colon (F,c) \rightarrow (G,d)$ in the category $\Bp$ is a natural transformation $\psi \colon F \Rightarrow G$ such that $\psi_{(X,V)} \circ c_{(X,V)} = d_{(X,V)}$, for all $(X,V) \in \opMeas \times \Ban$.
        \end{definition}

        \begin{example}
        \label{ex:Lp functor}
            The fundamental example of an object in the category $\Bp$ is the pair $(L^p,\sigma)$.
            Here, $L^p \colon \opMeas \times \Ban \rightarrow \Ban$ is the bifunctor which associates, to any object $(X,V) \in \opMeas \times \Ban$, the Banach space $L^p(X,V)$ of equivalence classes of $L^p$ Bochner integrable functions on $X$ with values in $V$, under equality almost everywhere. 
            The norm on the space $L^p(X,V)$ is defined, for any $p \in [1,\infty)$, by formula \eqref{eq:Lp norm} and, for $p = \infty$, by formula \eqref{eq:Linfty norm}. \\
            The action of the bifunctor $L^p$ on a morphism $\left( \left( \iota(X),\iota^{-1} \right)^{\mathrm{op}},T \right) \colon (X,V) \rightarrow (Y,W)$ in $\opMeas \times \Ban$, where $\iota \colon X \rightarrow Y$ is an embedding, is defined by formula \eqref{eq:action of Sp on emb-morphisms}.
            The fact that $L^p_{\left( \left( \iota(X),\iota^{-1} \right)^{\mathrm{op}},T \right)} \colon L^p(X,V) \rightarrow L^p(Y,W)$ is a contractive map is shown using the same arguments as in Example \ref{ex:Lpemb functor}.
            The action of $L^p$ on morphisms of the type $\left( m^{\mathrm{op}},T \right) \colon (X,V) \rightarrow (Y,W)$ in $\opMeas \times \Ban$, where $m \colon Y \rightarrow X$ is a measure-preserving map, is defined by formula \eqref{eq:action of Sp on measure-preserving maps}.
            The fact that $L^p_{\left( m^{\mathrm{op}},T \right)} \colon L^p(X,V) \rightarrow L^p(Y,W)$ is a contractive map can be verified, for any $p \in [1,\infty)$ and for any $f \in L^p(X,V)$, in the following way:
            \begin{align}
                \norm{L^p_{\left( m^{\mathrm{op}},T \right)} f}_{L^p(Y,W)} &= \left( \int_{Y} \norm{(T \circ f \circ m)(y)}_W^p d\mu_{Y}(y) \right)^{\frac{1}{p}} \notag \\
                &= \left( \int_{X} \norm{(T \circ f)(x)}_W^p d\mu_{X}(x) \right)^{\frac{1}{p}} \label{eq:use that m is a measure-preserving map} \\
                &\leq \left( \int_{X} \norm{f(x)}_V^p d\mu_{X}(x) \right)^{\frac{1}{p}} \label{eq:use once more that T is a contractive map} \\
                &= \norm{f}_{L^p(X,V)}, \notag
            \end{align}
            where equality \eqref{eq:use that m is a measure-preserving map} follows from the fact that $m$ is a measure-preserving map, while inequality \eqref{eq:use once more that T is a contractive map} follows from the fact that $T \colon V \rightarrow W$ is a contractive map.
            In the case $p = \infty$, for any $f \in L^\infty(X,V)$, we have:
            \begin{align}
                \norm{L^\infty_{\left( m^{\mathrm{op}},T \right)} f}_{L^\infty(Y,W)} &= \inf \Set{\epsilon \geq 0 | \norm{(T \circ f \circ m)(y)}_W \leq \epsilon \quad \text{a.e.~on} ~ Y} \notag \\
                &\leq \inf \Set{\epsilon \geq 0 | \norm{(T \circ f)(x)}_W \leq \epsilon \quad \text{a.e.~on} ~ X} \notag \\
                &\leq \inf \Set{\epsilon \geq 0 | \norm{f(x)}_V \leq \epsilon \quad \text{a.e.~on} ~ X} \label{eq:use once more again that T is a contractive map} \\
                &= \norm{f}_{L^\infty(X,V)}, \notag
            \end{align}
            where inequality \eqref{eq:use once more again that T is a contractive map} follows from the fact that $T$ is contractive. \\
            For any object $(X,V) \in \opMeas \times \Ban$, the linear map $\sigma_{(X,V)} \colon V \rightarrow L^p(X,V)$ is defined by formula \eqref{eq:definition of sigma} and its boundedness follows from the same arguments used in Example \ref{ex:simple functions with values in normed vector spaces 1}. \\
            Finally, axioms \eqref{ax:compatibility with linear contractive maps 1}, \eqref{ax:compatibility with complementary embeddings in the normed case 1}, \ref{ax:compatibility with measure-preserving maps in the normed case}, \eqref{ax:boundedness of c 1} and \eqref{ax:norm inequality wrt complementary embeddings 1} are satisfied for the same reasons as in Example \ref{ex:simple functions with values in normed vector spaces 2}.
        \end{example}
        
        \begin{remark}
        \label{rem:on all the forgetful functors}
            Expanding on Remark \ref{rem:relating Ban and NVect}, we point out the existence of two faithful and injective-on-objects functors $\mathcal{V}_{\mathrm{emb}}^p \colon \Bembp \hookrightarrow \NVembp$ and $\mathcal{V}^p \colon \Bp \hookrightarrow \NVp$. \\
            For any object $(F,c) \in \Bembp$ -- where we recall that $F \colon \embMeas \times \Ban \rightarrow \Ban$ is a bifunctor and $c$ assigns, to every object $(X,V) \in \embMeas \times \Ban$, a bounded linear map $c_{(X,V)} \colon V \rightarrow F(X,V)$ -- we define $\mathcal{V}_{\mathrm{emb}}^p(F,c) = \left( \mathcal{V}_{\mathrm{emb}}^pF,\mathcal{V}_{\mathrm{emb}}^pc \right) \in \NVembp$ as follows.
            The bifunctor $\mathcal{V}_{\mathrm{emb}}^pF$ is defined by pre-composing the bifunctor $F$ with the bifunctor $(\id,\overline{\cdot})$ and post-composing it with the functor $\mathcal{I}$ (both were introduced in Remark \ref{rem:relating Ban and NVect}). More explicitly, we set:  
            \begin{equation}
            \label{eq:definition of the functor Vembp}
                \mathcal{V}_{\mathrm{emb}}^pF = \mathcal{I} \circ F \circ (\id,\overline{\cdot}) \colon \embMeas \times \NVect \rightarrow \NVect.
            \end{equation}
            The bounded linear map $\left( \mathcal{V}_{\mathrm{emb}}^pc \right)_{(Y,W)} \colon W \rightarrow \left( \mathcal{V}_{\mathrm{emb}}^pF \right)(Y,W)$ is defined, for any object $(Y,W) \in \embMeas \times \NVect$, as: 
            \begin{equation*}
                \left( \mathcal{V}_{\mathrm{emb}}^pc \right)_{(Y,W)} = \mathcal{I}_{c_{(\id,\overline{\cdot})(Y,W)}}, 
            \end{equation*}
            where the last formula is understood as the functor $\mathcal{I} \colon \BAN \hookrightarrow \NVECT$ acting on the bounded linear map of Banach spaces $c_{(Y,\overline{W})} \colon \overline{W} \rightarrow F\left( Y,\overline{W} \right)$.  
            The validity of axioms \ref{ax:compatibility with linear contractive maps}, \ref{ax:compatibility with complementary embeddings in the normed case}, \ref{ax:boundedness of c} and \ref{ax:norm inequality wrt complementary embeddings} for $\mathcal{V}_{\mathrm{emb}}^p(F,c) \in \NVembp$ is ensured by the fact that $(F,c) \in \Bembp$ satisfies the same axioms. \\
            The action of the functor $\mathcal{V}_{\mathrm{emb}}^p$ on any morphism $\psi \colon (F,c) \rightarrow (G,d)$ in $\Bembp$ is defined by: 
            \begin{equation*}
                \left( \mathcal{V}_{\mathrm{emb}}^p \right)_{\psi} = \mathcal{I} \bullet \psi \bullet (\id,\overline{\cdot}) \colon \mathcal{V}_{\mathrm{emb}}^pF \Rightarrow \mathcal{V}_{\mathrm{emb}}^pG,
            \end{equation*}
            where $\mathcal{I} \bullet \psi \bullet (\id,\overline{\cdot})$ denotes the \emph{whiskered composite} of the natural transformation $\psi$ with the functors $\mathcal{I}$ and $(\id,\overline{\cdot})$ (see \cite[\S 1.7]{Riehl2016} for more details about whiskering).
            The requirement, for any morphism in the category $\NVembp$, that the equality $\left( \left( \mathcal{V}_{\mathrm{emb}}^p \right)_{\psi} \right)_{(Y,W)} \circ \left( \mathcal{V}_{\mathrm{emb}}^pc \right)_{(Y,W)} = \left( \mathcal{V}_{\mathrm{emb}}^pd \right)_{(Y,W)}$ is satisfied for any object $(Y,W) \in \embMeas \times \NVect$, can be checked as follows:
            \begin{align}
                \left( \left( \mathcal{V}_{\mathrm{emb}}^p \right)_{\psi} \right)_{(Y,W)} \circ \left( \mathcal{V}_{\mathrm{emb}}^pc \right)_{(Y,W)} &= \mathcal{I}_{\psi_{(\id,\overline{\cdot})(Y,W)}} \circ \mathcal{I}_{c_{(\id,\overline{\cdot})(Y,W)}} \notag \\
                &= \mathcal{I}_{\psi_{(\id,\overline{\cdot})(Y,W)} \circ c_{(\id,\overline{\cdot})(Y,W)}} \notag \\
                &= \mathcal{I}_{d_{(\id,\overline{\cdot})(Y,W)}} \label{eq:use that psi is a morphism in Bembp} \\
                &= \left( \mathcal{V}_{\mathrm{emb}}^pd \right)_{(Y,W)}, \notag
            \end{align}
            where equation \eqref{eq:use that psi is a morphism in Bembp} follows from the fact that $\psi \colon (F,c) \rightarrow (G,d)$ is a morphism in the category $\Bembp$. \\ 
            The analogous faithful and injective-on-objects functor $\mathcal{V}^p \colon \Bp \hookrightarrow \NVp$ is defined in the exact same way as the functor $\mathcal{V}_{\mathrm{emb}}^p$ except that the category $\embMeas$ has to be substituted everywhere with the category $\opMeas$. \\ 
            Combining $\mathcal{V}_{\mathrm{emb}}^p$ and $\mathcal{V}^p$ with the functor $\mathcal{U}^p \colon \NVp \rightarrow \NVembp$ from Remark \ref{rem:forgetful functor from NVp to NVembp}, we obtain a faithful functor $\tilde{\mathcal{U}}^p \colon \Bp \rightarrow \Bembp$ defined as the restriction of $\mathcal{U}^p$ to the subcategories $\Bp$ and $\Bembp$.
            The overall situation is summarised in the following commutative diagram of functors:
            \begin{equation*}
                \begin{tikzcd}
                    \Bp \ar[rr, "\tilde{\mathcal{U}}^p"] \ar[dd, hook, "\mathcal{V}^p"']    &   & \Bembp \ar[dd, hook, "\mathcal{V}_{\mathrm{emb}}^p"] \\
                        &   & \\
                    \NVp \ar[rr, "\mathcal{U}^p"]     &   & \NVembp
                \end{tikzcd}
            \end{equation*}
            As a special instance of this feature, we have that the object $(L^p,\sigma) \in \Bembp$, discussed in Example \ref{ex:Lpemb functor}, is the image of the object $(L^p,\sigma) \in \Bp$, discussed in Example \ref{ex:Lp functor}, via the functor $\tilde{\mathcal{U}}^p$. 
            Incidentally, this also motivates why we use the same symbol to denote them.
        \end{remark}

    \subsection{Main results}
    \label{sec:main results}

        We are finally ready to prove the main results of this paper. 
        First, we establish the universal properties of the Banach spaces of equivalence classes of $L^p$ Bochner integrable functions, under equality almost everywhere, for all $p \in [1,\infty)$.
        Then, we show how the universal property in the case $p = 1$ naturally yields a unique characterisation of the Bochner integral.  
    
        \begin{theorem}
        \label{thm:universal properties of Bochner integrable functions}
            Let $p \in [1,\infty)$. The bifunctors of the Banach spaces of equivalence classes of $L^p$ Bochner integrable functions, under equality almost everywhere, have the following universal properties:
            \begin{itemize}
                \item[(i)] $(L^p,\sigma)$ is the initial object of the category $\Bembp$;
                \item[(ii)] $(L^p,\sigma)$ is the initial object of the category $\Bp$.
            \end{itemize}        
        \end{theorem}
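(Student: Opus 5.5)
The plan is to reduce to Theorem \ref{thm:universal properties of simple functions with values in normed vector spaces} via the forgetful functors of Remark \ref{rem:on all the forgetful functors}, and then extend by continuity. Fix $(F,c)\in\Bembp$ and $p\in[1,\infty)$.

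\emph{Existence.} Apply $\mathcal{V}_{\mathrm{emb}}^p$ to obtain $\mathcal{V}_{\mathrm{emb}}^p(F,c)\in\NVembp$. Theorem \ref{thm:universal properties of simple functions with values in normed vector spaces}(i) then gives a unique morphism $\eta\colon(S^p,\sigma)\to\mathcal{V}_{\mathrm{emb}}^p(F,c)$. For a Banach space $V$ we have $\overline{V}=V$, so the component $\eta_{(X,V)}$ is a linear contraction $S^p(X,V)\to F(X,V)$. The key analytic input is that, for $p<\infty$, $S^p(X,V)$ is dense in $L^p(X,V)$ with the induced norm. This holds because strongly measurable $L^p$ functions are approximated in $L^p$ norm by simple functions, by the dominated convergence argument recalled in the introduction; finiteness of $\mu_X(X)$ ensures all simple functions lie in $L^p$. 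Since $F(X,V)$ is complete, $\eta_{(X,V)}$ extends uniquely to a linear contraction $\bar\eta_{(X,V)}\colon L^p(X,V)\to F(X,V)$.

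Next I check that $\bar\eta$ is a morphism. For naturality, take $(\iota,T)\colon(X,V)\to(Y,W)$ in $\embMeas\times\Ban$. Both $F_{(\iota,T)}\circ\bar\eta_{(X,V)}$ and $\bar\eta_{(Y,W)}\circ L^p_{(\iota,T)}$ are continuous, and they agree on the dense subspace $S^p(X,V)$ by naturality of $\eta$, because $L^p_{(\iota,T)}$ restricts to $S^p_{(\iota,T)}$. Hence they agree everywhere. The condition $\bar\eta\circ\sigma=c$ holds because $\sigma$ lands in $S^p$.

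\emph{Uniqueness.} Any morphism $(L^p,\sigma)\to(F,c)$ restricts along the inclusions $S^p(X,V)\subseteq L^p(X,V)$ to a morphism $(S^p,\sigma)\to\mathcal{V}_{\mathrm{emb}}^p(F,c)$ in $\NVembp$, but only on Banach-space arguments $V$. It is therefore cleanest to reprove uniqueness directly: the uniqueness argument in Theorem \ref{thm:universal properties of simple functions with values in normed vector spaces} uses only naturality along $(\iota,\id_V)$ and formula \eqref{eq:expression of simple functions using sigma}. It thus determines any such morphism on $S^p(X,V)$ for Banach $V$. Continuity and density then determine it on $L^p(X,V)$. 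Equivalently, one may phrase the whole argument as showing that $(\id,\overline{\cdot})$ and the completion of $S^p$ give a left adjoint to $\mathcal{V}_{\mathrm{emb}}^p$ at $(S^p,\sigma)$. The hands-on density argument avoids having to verify that adjunction in general.

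\emph{Part (ii).} For $(F,c)\in\Bp$, part (i) gives a unique $\bar\eta$ in $\Bembp$ via $\tilde{\mathcal{U}}^p$. It remains to check naturality with respect to the measure-preserving morphisms $(m^{\mathrm{op}},T)$, as in the proof of part (ii) of the previous theorem. On $S^p$ this follows from Lemma \ref{lemma:functors and measure-preserving maps in the normed case}, and it extends to $L^p$ by continuity and density, since $L^p_{(m^{\mathrm{op}},T)}$ restricts to $S^p_{(m^{\mathrm{op}},T)}$. Uniqueness is inherited from (i) because $\tilde{\mathcal{U}}^p$ is faithful.

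The main obstacle is the density of $S^p(X,V)$ in $L^p(X,V)$, together with the correct identification of $L^p(X,V)$ as the completion of $S^p(X,V)$. This is exactly where $p<\infty$ is needed, since for $p=\infty$ simple functions are not dense. I would prove it by taking simple $g_n\to f$ a.e. with $\norm{g_n}_V\le 2\norm{f}_V$ and applying dominated convergence to $\norm{f-g_n}_V^p$.
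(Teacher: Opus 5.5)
Your proof is correct, but it is organised differently from the paper's.

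The paper never extends a morphism by hand. It defines a completion functor $\mathcal{C}_{\mathrm{emb}}^p=\overline{\cdot}\circ(-)\circ(\id,\mathcal{I})\colon\NVembp\to\Bembp$. It asserts $\mathcal{C}_{\mathrm{emb}}^p\dashv\mathcal{V}_{\mathrm{emb}}^p$, on the grounds that both functors arise by whiskering with the adjunctions $\overline{\cdot}\dashv\mathcal{I}$ and $(\id,\overline{\cdot})\dashv(\id,\mathcal{I})$. It then uses that left adjoints preserve initial objects, so $\mathcal{C}_{\mathrm{emb}}^p(S^p,\sigma)$ is initial. Density of simple functions enters only when identifying $\overline{S^p(X,V)}$ with $L^p(X,V)$. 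Part (ii) is obtained by rerunning the same adjunction argument over $\opMeas$.

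What you do amounts to verifying the bijection $\mathrm{Hom}_{\Bembp}\bigl(\mathcal{C}_{\mathrm{emb}}^p(S^p,\sigma),(F,c)\bigr)\cong\mathrm{Hom}_{\NVembp}\bigl((S^p,\sigma),\mathcal{V}_{\mathrm{emb}}^p(F,c)\bigr)$ only at the one object where it is needed. Extension by continuity gives one direction, restriction plus density the other.

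The paper's route is shorter and more conceptual. However, it leaves implicit that the whiskering result for plain functor categories lifts to the structured categories: objects are pairs $(F,c)$ subject to the axioms, and morphisms must be compatible with $c$. In particular, one must check that unit and counit respect the $c$-data. Your route sidesteps this.

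Your route also shows exactly where $p<\infty$ and completeness of $F(X,V)$ are used. It handles uniqueness carefully: you rightly note that a morphism out of $(L^p,\sigma)$ in $\Bembp$ has components only at Banach $V$, so it cannot simply be restricted to a morphism in $\NVembp$. You therefore re-run the uniqueness argument via naturality along $(\iota,\id_V)$, compatibility with $\sigma$, and density.

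For (ii) you also differ from the paper. You derive it from (i), density, and Lemma \ref{lemma:functors and measure-preserving maps in the normed case}, mirroring the proof of Theorem \ref{thm:universal properties of simple functions with values in normed vector spaces}(ii), instead of setting up a second adjunction $\mathcal{C}^p\dashv\mathcal{V}^p$. Two small points here: you rely on naturality with respect to embeddings and measure-preserving maps sufficing, which follows from Remark \ref{rem:structure of morphisms in Meas}. The lemma is stated for $\NVp$, so apply it to $\mathcal{V}^p(F,c)$ or note that its proof works verbatim for objects of $\Bp$.
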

        
        \begin{proof}
            Let us consider statement (i).
            We define a functor $\mathcal{C}_{\mathrm{emb}}^p \colon \NVembp \rightarrow \Bembp$ as follows. 
            The action of $\mathcal{C}_{\mathrm{emb}}^p$ on any object $(F,c) \in \NVembp$ -- where we recall that $F \colon \embMeas \times \NVect \rightarrow \NVect$ is a bifunctor and $c$ assigns, to every object $(Y,W) \in \embMeas \times \NVect$, a bounded linear map of normed vector spaces $c_{(Y,W)} \colon W \rightarrow F(Y,W)$ -- is denoted by $\mathcal{C}_{\mathrm{emb}}^p(F,c) = \left( \mathcal{C}_{\mathrm{emb}}^pF,\mathcal{C}_{\mathrm{emb}}^pc \right) \in \Bembp$ and is defined informally by pre-composing the bifunctor $F$ with the bifunctor $(\id,\mathcal{I})$ and post-composing it with the functor $\overline{\cdot}$ (both were introduced in Remark \ref{rem:relating Ban and NVect}). More explicitly, we set:
            \begin{equation}
            \label{eq:definition of the functor Cembp}
                \begin{split}
                    \mathcal{C}_{\mathrm{emb}}^pF = \overline{\cdot} \circ F \circ (\id,\mathcal{I}) \colon \embMeas \times \Ban \rightarrow \Ban.
                \end{split}
            \end{equation}
            The bounded linear map $\left( \mathcal{C}_{\mathrm{emb}}^pc \right)_{(X,V)} \colon V \rightarrow \left( \mathcal{C}_{\mathrm{emb}}^pF \right)(X,V)$ is defined, for any object $(X,V) \in \embMeas \times \Ban$, as: 
            \begin{equation}
            \label{eq:action of Cembp on c}
                \left( \mathcal{C}_{\mathrm{emb}}^pc \right)_{(X,V)} = \overline{\cdot}_{c_{(\id,\mathcal{I})(X,V)}}, 
            \end{equation}
            where the completion functor $\overline{\cdot} \colon \NVECT \rightarrow \BAN$ acts on the bounded linear map of normed vector spaces $c_{(X,\mathcal{I}V)} \colon \mathcal{I}V \rightarrow F\left( Y,\mathcal{I}V \right)$.
            The validity of axioms \ref{ax:compatibility with linear contractive maps}, \ref{ax:compatibility with complementary embeddings in the normed case}, \ref{ax:boundedness of c} and \ref{ax:norm inequality wrt complementary embeddings} for $\mathcal{C}_{\mathrm{emb}}^p(F,c) \in \Bembp$ is ensured by the fact that the axioms do not rely on any completeness assumption for the vector spaces and by the fact that the object $(F,c) \in \NVembp$ satisfies the same axioms. \\
            The action of the functor $\mathcal{C}_{\mathrm{emb}}^p$ on any morphism $\psi \colon (F,c) \rightarrow (G,d)$ in the category $\NVembp$ is defined by:
            \begin{equation*}
                \left( \mathcal{C}_{\mathrm{emb}}^p \right)_{\psi} = \overline{\cdot} \bullet \psi \bullet (\id,\mathcal{I}) \colon \mathcal{C}_{\mathrm{emb}}^pF \Rightarrow \mathcal{C}_{\mathrm{emb}}^pG,
            \end{equation*}
            where $\overline{\cdot} \bullet \psi \bullet (\id,\mathcal{I})$ denotes the whiskered composite of the natural transformation $\psi$ with the functors $\overline{\cdot}$ and $(\id,\mathcal{I})$ (see \cite[\S 1.7]{Riehl2016} about whiskering).
            The requirement that the equality $\left( \left( \mathcal{C}_{\mathrm{emb}}^p \right)_{\psi} \right)_{(X,V)} \circ \left( \mathcal{C}_{\mathrm{emb}}^pc \right)_{(X,V)} = \left( \mathcal{C}_{\mathrm{emb}}^pd \right)_{(X,V)}$ is satisfied for any object $(X,V) \in \embMeas \times \Ban$ can be checked as follows:
            \begin{align}
                \left( \left( \mathcal{C}_{\mathrm{emb}}^p \right)_{\psi} \right)_{(X,V)} \circ \left( \mathcal{C}_{\mathrm{emb}}^pc \right)_{(X,V)} &= \overline{\cdot}_{\psi_{(\id,\mathcal{I})(X,V)}} \circ \overline{\cdot}_{c_{(\id,\mathcal{I})(X,V)}} \notag \\
                &= \overline{\cdot}_{\psi_{(\id,\mathcal{I})(X,V)} \circ c_{(\id,\mathcal{I})(X,V)}} \notag \\
                &= \overline{\cdot}_{d_{(\id,\mathcal{I})(X,V)}} \label{eq:use that psi is a morphism in NVembp} \\
                &= \left( \mathcal{C}_{\mathrm{emb}}^pd \right)_{(X,V)}, \notag
            \end{align}
            where equation \eqref{eq:use that psi is a morphism in NVembp} follows from the fact that $\psi \colon (F,c) \rightarrow (G,d)$ is a morphism in the category $\NVembp$.

            We claim that the functor $\mathcal{C}_{\mathrm{emb}}^p \colon \NVembp \rightarrow \Bembp$ is left adjoint to the functor $\mathcal{V}_{\mathrm{emb}}^p \colon \Bembp \hookrightarrow \NVembp$ described in Remark \ref{rem:on all the forgetful functors}.                 
            To verify this, it suffices to notice that, while the functor $\mathcal{V}_{\mathrm{emb}}^p$ is defined, in formula \eqref{eq:definition of the functor Vembp}, by pre-composition and post-composition with certain functors, the functor $\mathcal{C}_{\mathrm{emb}}^p$ is defined, in formula \eqref{eq:definition of the functor Cembp}, by pre-composition and post-composition with the left adjoints of the corresponding functors (cf.~Remark \ref{rem:relating Ban and NVect}).
            The claim then follows from standard results in category theory, see for example \cite[Proposition 4.4.6]{Riehl2016}. \\
            Using the well-known fact that left adjoint functors preserve initial objects, in combination with Theorem \ref{thm:universal properties of simple functions with values in normed vector spaces}, we thus obtain that the initial object of the category $\Bembp$ is $\mathcal{C}_{\mathrm{emb}}^p(S^p,\sigma) = \left( \mathcal{C}_{\mathrm{emb}}^p S^p,\mathcal{C}_{\mathrm{emb}}^p \sigma \right)$, for any $p \in [1,\infty)$. \\
            According to formula \eqref{eq:definition of the functor Cembp}, the bifunctor $\mathcal{C}_{\mathrm{emb}}^p S^p \colon \embMeas \times \Ban \rightarrow \Ban$ associates, to any object $(X,V) \in \embMeas \times \Ban$, the Banach space
            \begin{equation*}
                \begin{split}
                    \left( \mathcal{C}_{\mathrm{emb}}^p S^p \right)(X,V) &= \left( \overline{\cdot} \circ S^p \circ (\id,\mathcal{I}) \right)(X,V) \\
                    &= \left( \overline{\cdot} \circ S^p \right)(X,\mathcal{I}V) \\
                    &= \overline{S^p(X,\mathcal{I}V)} \\
                    &= L^p(X,V), 
                \end{split}
            \end{equation*}
            where the last equality corresponds to the definition of the Banach spaces of equivalence classes of $L^p$ Bochner integrable functions, under equality almost everywhere, as the completion, with respect to the $L^p$ norm \eqref{eq:Lp norm}, of the normed vector spaces of equivalence classes of simple functions, under equality almost everywhere (we refer to \cite{Cohn2013} and \cite{Yosida1995} for more details). \\
            Regarding the bounded linear map $\left( \mathcal{C}_{\mathrm{emb}}^p \sigma \right)_{(X,V)} \colon V \rightarrow \mathcal{C}_{\mathrm{emb}}^p S^p(X,V)$, for any object $(X,V) \in \embMeas \times \Ban$, it is obtained, according to formula \eqref{eq:action of Cembp on c}, as the extension by continuity of the bounded linear map $\sigma_{(X,\mathcal{I}V)} \colon \mathcal{I}V \rightarrow S^p(X,\mathcal{I}V)$ between normed vector spaces to a bounded linear map from the completion of $\mathcal{I}V$ to the completion of $S^p(X,\mathcal{I}V)$.
            Since $\mathcal{I}V$ is already complete, the extension is in fact trivial and the resulting map $\left( \mathcal{C}_{\mathrm{emb}}^p \sigma \right)_{(X,V)}$ coincides with the map $\sigma_{(X,V)}$, except that its values are regarded as elements of $\overline{S^p(X,\mathcal{I}V)} = L^p(X,V)$. \\
            We have thus identified $\mathcal{C}_{\mathrm{emb}}^p(S^p,\sigma) = (L^p,\sigma)$ as the initial object of the category $\Bembp$, for any $p \in [1,\infty)$, and this concludes the proof of statement (i).
            
            The same argument, with the category $\embMeas$ substituted everywhere by the category $\opMeas$, the functors $\mathcal{C}_{\mathrm{emb}}^p$ and $\mathcal{V}_{\mathrm{emb}}^p$ substituted by the functors $\mathcal{C}^p$ and $\mathcal{V}^p$, the categories $\NVembp$ and $\Bembp$ substituted by the categories $\NVp$ and $\Bp$, yields the proof of statement (ii). 
        \end{proof}

        \begin{remark}
        \label{rem:issue with the infinity case}
            The case $p = \infty$ is not covered by Teorem \ref{thm:universal properties of Bochner integrable functions} because the normed vector spaces $S^\infty(X,V)$ of equivalence classes of simple functions, under equality almost everywhere, endowed with the $L^\infty$ norm \eqref{eq:Linfty norm} are in general not dense inside the Banach spaces $L^\infty(X,V)$, for any measure space $X$ and any Banach space $V$. \\
            Density of $S^\infty(X,V)$ inside $L^\infty(X,V)$ is in fact guaranteed only when $V$ is a separable Banach space. \\  
            The following elementary example illustrates the situation. 
            Consider the measure space $\N$ endowed with the discrete measure $\mu_{\N}(n) = 2^{-n}$, for all $n \in \N$.
            Then, $(\N,\mu_{\N})$ has finite total measure and we can regard it as an object of the category $\Meas$.
            Consider the Hilbert space $l^2(\N)$ and define a function $f \colon \N \rightarrow l^2(\N)$ by setting $f(n) = e_n$, for any $n \in \N$, where $\{e_n\}_{n \in \N}$ is the standard ortonormal basis of $l^2(\N)$.
            It is easy to check that $f$ is strongly measurable, Bochner integrable and satisfies $\norm{f}_{L^\infty\left( \N,l^2(\N) \right)} = 1$.
            Suppose $g \in L^\infty\left( \N,l^2(\N) \right)$ is any simple function.
            Since $g$ has finite range, at least one of its level-sets, say $A_x = g^{-1}(x)$ for some $x \in l^2(\N)$, must be infinite inside $\N$.
            For any elements $n_1,n_2 \in A_x$, we then have:
            \begin{equation*}
                \begin{split}
                    \sqrt{2} &= \norm{e_{n_1} - e_{n_2}}_{l^2(\N)} \\
                    &= \norm{f(n_1) - f(n_2)}_{l^2(\N)} \\
                    &= \norm{\left( f(n_1) - x \right) - \left( f(n_2) - x \right)}_{l^2(\N)} \\
                    &= \norm{\left( f(n_1) - g(n_1) \right) - \left( f(n_2) - g(n_2) \right)}_{l^2(\N)} \\
                    &\leq \norm{f(n_1) - g(n_1)}_{l^2(\N)} + \norm{f(n_2) - g(n_2)}_{l^2(\N)}.
                \end{split}
            \end{equation*}
            Taking the supremum of the last inequality over all $n_1,n_2 \in \N$, we finally obtain
            \begin{equation*}
                \frac{\sqrt{2}}{2} \leq \norm{f - g}_{L^\infty\left( \N,l^2(\N) \right)},
            \end{equation*}
            hence simple functions cannot be dense in $L^\infty\left( \N,l^2(\N) \right)$.
        \end{remark}

        \begin{remark}
            Following-up on Remark \ref{rem:connection to Leinster}, we briefly comment on the relation between our approach and the approach of \cite{Leinster2023}.
            Despite the fact that our framework provides, as a particular case, a universal characterisation of the Banach spaces of equivalence classes of $\K$-valued $L^p$ Bochner integrable functions, under equality almost everywhere, for any $p \in [1,\infty)$, there does not appear to be any simple relation between the categories $\Bembp$ or $\Bp$ and the categories $\mathscr{B}_{\mathrm{emb}}^p \subset \Ban^{\embMeas}$ or $\mathscr{B}^p \subset \Ban^{\opMeas}$, introduced in \cite[\S3]{Leinster2023}.  
        \end{remark}

        Theorem \ref{thm:universal properties of Bochner integrable functions} uniquely characterises the Banach spaces of equivalence classes of $L^p$ Bochner integrable functions, under equality almost everywhere, up to isometric isomorphisms, since the morphisms of the category $\Ban$ are contractive maps.

        The universal properties of the objects $(L^p,\sigma) \in \Bembp$, for any $p \in [1,\infty)$, naturally yield a unique characterisation of Bochner integration, as follows. \\
        Set $p = 1$ and consider the bifunctor $\proj \colon \embMeas \times \Ban \rightarrow \Ban$ defined on objects and morphisms by projecting to the second entry. 
        More concretely, for each object $(X,V) \in \embMeas \times \Ban$ and for any morphism $(\iota,T) \colon (X,V) \rightarrow (Y,W)$ in $\embMeas \times \Ban$, we define $\proj(X,V) = V \in \Ban$ and
        \begin{equation}
        \label{eq:action of proj on morphisms}
            \left( \proj \right)_{(\iota,T)} = T \colon V \rightarrow W.
        \end{equation}
        In addition, let $\tau$ assign, to any object $(X,V) \in \embMeas \times \Ban$, the map:
        \begin{equation}
        \label{eq:definition of tau}
            \tau_{(X,V)} = \mu_X(X) \id_V \colon V \rightarrow \proj(X,V) = V.
        \end{equation}
        It is easy to check that the pair $(\proj,\tau)$ defines an object in the category $\Bemb$.
        In fact, for any morphism $(\id_X,T) \colon (X,V) \rightarrow (X,W)$ in $\embMeas \times \Ban$, axiom \ref{ax:compatibility with linear contractive maps} follows directly from formulas \eqref{eq:action of proj on morphisms} and \eqref{eq:definition of tau}:
        \begin{equation*}
            \begin{split}
                \left( \proj \right)_{(\id_X,T)} \circ \tau_{(X,V)} &= T \circ \mu_X(X) \id_V \\
                &= \mu_X(X) \id_W \circ ~ T \\
                &= \tau_{(X,W)} \circ T.
            \end{split}
        \end{equation*}
        For any morphisms $(\alpha,\id_V) \colon (Y,V) \rightarrow (X,V)$ and $(\beta,\id_V) \colon (Z,V) \rightarrow (X,V)$ in the category $\embMeas \times \Ban$, where $Y \xrightarrow{~\alpha~} X \xleftarrow{~\beta~} Z$ are a couple of complementary embeddings, the validity of axiom \ref{ax:compatibility with complementary embeddings in the normed case} is obtained by:  
        \begin{align}
                \tau_{(Y,V)}^{(X,V)} + \tau_{(Z,V)}^{(X,V)} &= \left( \proj \right)_{(\alpha,\id_V)} \circ \tau_{(Y,V)} + \left( \proj \right)_{(\beta,\id_V)} \circ \tau_{(Z,V)} \notag \\
                &= \mu_Y(Y) \id_V + \mu_Z(Z) \id_V \notag \\
                &= \left[ \mu_X\left( \alpha(Y) \right) + \mu_X\left( \beta(Z) \right)\right] \id_V \label{eq:use that embeddings preserve measures} \\
                &= \mu_X(X) \id_V \label{eq:use that alpha and beta are complementary embeddings} \\
                &= \tau_{(X,V)} \notag,
        \end{align}
        where equation \eqref{eq:use that embeddings preserve measures} follows from the fact that embeddings preserve measures and equation \eqref{eq:use that alpha and beta are complementary embeddings} from the fact that $X = \alpha(Y) \cup \beta(Z)$ and $\alpha(Y) \cap \beta(Z) = \emptyset$ (see Definition \ref{def:complementary embeddings}).
        Regarding the validity of axiom \ref{ax:boundedness of c}, formula \eqref{eq:definition of tau} yields directly $\norm{\tau_{(X,V)}}_{\mathcal{L}(V,V)} \leq \mu_X(X)$, for any object $(X,V) \in \embMeas \times \Ban$.
        Finally, for any morphisms $(\alpha,\id_V) \colon (Y,V) \rightarrow (X,V)$ and $(\beta,\id_V) \colon (Z,V) \rightarrow (X,V)$ in the category $\embMeas \times \Ban$, where $Y \xrightarrow{~\alpha~} X \xleftarrow{~\beta~} Z$ are a couple of complementary embeddings, for all $u \in \proj(Y,V) = V$ and $w \in \proj(Z,V) = V$, axiom \ref{ax:norm inequality wrt complementary embeddings} is simply given by the triangle inequality in the Banach space $V$
        \begin{equation*}
            \begin{split}
                \norm{\left( \proj \right)_{(\alpha,\id_V)} u + \left( \proj \right)_{(\beta,\id_V)} w}_V = \norm{u + w}_V \leq \norm{u}_V + \norm{w}_V.
            \end{split}
        \end{equation*}

        We are now ready to prove, as a last step, the third main result of this paper.
        
        \begin{proposition}
        \label{prop:uniqueness of the Bochner integral}
            The unique morphism $(L^1,\sigma) \rightarrow (\proj,\tau)$ in the category $\Bemb$ is given, component-wise, by the Bochner integral:
            \begin{equation*}
                \int_{(X,V)} \colon L^1(X,V) \rightarrow V,
            \end{equation*}
            for any object $(X,V) \in \embMeas \times \Ban$. 
        \end{proposition}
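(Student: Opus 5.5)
Since $(L^1,\sigma)$ is initial in $\Bemb$ by Theorem \ref{thm:universal properties of Bochner integrable functions}(i), and $(\proj,\tau)$ was just checked to be an object of $\Bemb$, there is exactly one morphism $(L^1,\sigma) \rightarrow (\proj,\tau)$. So it suffices to show that the family of Bochner integrals $\int_{(X,V)} = \int_X \colon L^1(X,V) \rightarrow V$ is itself a morphism in $\Bemb$; uniqueness then forces it to be the morphism. I would verify three things in order: each component is a linear contraction, the family is natural, and it is compatible with $\sigma$ and $\tau$.

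\emph{Components are morphisms of $\Ban$.} Linearity of the Bochner integral is standard. For contractivity I use the inequality $\norm{\int_X f \, d\mu_X}_V \leq \int_X \norm{f}_V \, d\mu_X = \norm{f}_{L^1(X,V)}$. This holds for simple functions, as recalled in the introduction, and passes to all of $L^1(X,V)$ by density and continuity.

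\emph{Naturality.} Take a morphism $(\iota,T) \colon (X,V) \rightarrow (Y,W)$ in $\embMeas \times \Ban$. I need
\begin{equation*}
\int_Y L^1_{(\iota,T)} f \, d\mu_Y = T\left( \int_X f \, d\mu_X \right).
\end{equation*}
Both sides are bounded linear maps $L^1(X,V) \rightarrow W$: the left side is a composite of contractions, and the right side is $T$ composed with a contraction. It is therefore enough to check the identity on the dense subspace of simple functions. By formula \eqref{eq:expression of simple functions using sigma} and linearity, this reduces further to $f = v \cdot \chi_B$ with $B \subseteq X$ measurable. Then $L^1_{(\iota,T)} f = T(v)\chi_{\iota(B)}$, so the left side is $\mu_Y(\iota(B))\,T(v) = \mu_X(B)\,T(v)$, because embeddings preserve measures. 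This equals the right side. Alternatively, one can split the morphism as in Remark \ref{rem:morphisms and functors in product categories in the normed case}: use \eqref{eq:linear forms} for the $T$ part, and the change of variables along the embedding for the $\iota$ part.

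\emph{Compatibility with $\sigma$ and $\tau$.} This is immediate: $\int_X v\chi_X \, d\mu_X = \mu_X(X)\, v = \tau_{(X,V)}(v)$.

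The main, though mild, obstacle is the naturality step. It rests on the density of simple functions in $L^1(X,V)$, which is exactly how $L^1(X,V)$ arose in the proof of Theorem \ref{thm:universal properties of Bochner integrable functions}, as the completion of $S^1(X,V)$. It also rests on the continuity of all the maps involved, so that the check on indicator-times-vector functions is genuinely sufficient.
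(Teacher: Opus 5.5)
Your proof is correct and follows the same route as the paper: you invoke the initiality of $(L^1,\sigma)$ from Theorem~\ref{thm:universal properties of Bochner integrable functions}(i), then verify that the Bochner integrals form a morphism $(L^1,\sigma)\rightarrow(\proj,\tau)$, checking linear contractive components, naturality, and $\int_{(X,V)}\circ\sigma_{(X,V)}=\tau_{(X,V)}$. The only difference is in the naturality step: you check the identity on functions $v\cdot\chi_B$ and extend by density and continuity, whereas the paper combines the change of variables along $\iota$ with the commutation $\int_X T\circ f\,d\mu_X = T\bigl(\int_X f\,d\mu_X\bigr)$, which is exactly the alternative you mention.
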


        \begin{proof}
            By statement (i) of Theorem \ref{thm:universal properties of Bochner integrable functions}, it suffices to show that $\int_{-}$ is a morphism in the category $\Bemb$.
            For any object $(X,V) \in \embMeas \times \Ban$, the fact that the component $\int_{(X,V)} \colon L^1(X,V) \rightarrow V$ is a linear contractive map follows directly from linearity of the Bochner integral and from the basic inequality (see, for example, \cite[Theorem II.2.4]{Diestel/Uhl1977}) 
            \begin{equation*}
                \norm{\int_X f d\mu_X}_V \leq \int_X \norm{f}_V d\mu_X, \qquad \forall f \in L^1(X,V).
            \end{equation*}
            In order to check naturality of the transformation $\int_{-} \colon L^1 \Rightarrow \proj$, we need to show that, for any morphism $(\iota,T) \colon (X,V) \rightarrow (Y,W)$ in $\embMeas \times \Ban$, the diagram
            \begin{equation*}
                \begin{tikzcd}
                    L^1(X,V) \ar[rr, "L^1_{(\iota,T)}"] \ar[dd, "\int_{(X,V)}"']  &   & L^1(Y,W) \ar[dd, "\int_{(Y,W)}"] \\
                        &   & \\
                    \proj(X,V) \ar[rr, "\left( \proj \right)_{(\iota,T)}"]   &   & \proj(Y,W)
                \end{tikzcd}
            \end{equation*}
            commutes in the category $\Ban$.
            Combining formula \eqref{eq:action of Sp on emb-morphisms} with formula \eqref{eq:action of proj on morphisms}, for any element $f \in L^1(X,V)$, we obtain: 
            \begin{align}
                \left( \int_{(Y,W)} \circ ~ L^1_{(\iota,T)} \right) (f) &= \int_Y L^1_{(\iota,T)} f d\mu_Y \notag \\
                &= \int_{\iota(X)} T \circ f \circ \iota^{-1} d\mu_{\iota(X)} \label{eq:use definition of L1_iotaT} \\
                &= \int_X T \circ f d\mu_X \label{eq:use that iota is measure-preserving} \\
                &= T \left( \int_X f d\mu_X \right) \label{eq:use properties of the Bochner integral} \\
                &= \left( \left( \proj \right)_{(\iota,T)} \circ \int_{(X,V)} \right) (f), \notag
            \end{align}
            where equation \eqref{eq:use definition of L1_iotaT} follows from the definition of $L^1_{(\iota,T)} f$, equation \eqref{eq:use that iota is measure-preserving} follows from the fact that embeddings preserve measures and equation \eqref{eq:use properties of the Bochner integral} is one of the basic properties of the Bochner integral (cf.~\cite[\S V.5, Corollary 2]{Yosida1995}). \\
            Finally, we need to check the requirement that $\int_{(X,V)} \circ ~ \sigma_{(X,V)} = \tau_{(X,V)}$, for any object $(X,V) \in \embMeas \times \Ban$.
            Using formulas \eqref{eq:definition of sigma} and \eqref{eq:definition of tau}, for any vector $v \in V$, we obtain indeed: 
            \begin{align*}
                \left( \int_{(X,V)} \circ ~ \sigma_{(X,V)} \right) (v) &= \int_X \sigma_{(X,V)}(v) d\mu_X \\
                &= \int_X v \cdot \chi_X d\mu_X \\
                &= \mu_X(X) \cdot v \\
                &= \tau_{(X,V)} (v).
            \end{align*}
            This concludes the proof of the proposition.
        \end{proof}

        \begin{remark}
            In Proposition \ref{prop:uniqueness of the Bochner integral}, we have regarded the pair $(\proj,\tau)$ as an object of the category $\Bemb$.
            But the pair $(\proj,\tau)$ can also be regarded as an object of the category $\B$, where the bifunctor $\proj \colon \opMeas \times \Ban \rightarrow \Ban$ is the obvious extension of the bifunctor $\proj \colon \embMeas \times \Ban \rightarrow \Ban$ and the assignment $\tau$ is left unchanged.
            In particular, the validity of axiom \ref{ax:compatibility with measure-preserving maps in the normed case}, for any morphism $(m^{\mathrm{op}},\id_V) \colon (X,V) \rightarrow (Y,V)$ in the category $\opMeas \times \Ban$, where $m \colon Y \rightarrow X$ is a measure-preserving map, can be checked as follows:
            \begin{align}
                \left( \proj \right)_{(m^{\mathrm{op}},\id_V)} \circ \tau_{(X,V)} &= \id_V \circ \, \mu_X(X) \id_V \notag \\
                &= \mu_X(X) \id_V \notag \\
                &= \mu_Y(Y) \id_V \label{eq:using that m is measure-preserving} \\
                &= \tau_{(Y,V)} \notag,
            \end{align}
            where eqation \eqref{eq:using that m is measure-preserving} follows from the fact that $m$ is a measure-preserving map. \\
            From statement (ii) of Theorem \ref{thm:universal properties of Bochner integrable functions} we then obtain that the unique morphism $\int_{-} \colon (L^1,\sigma) \rightarrow (\proj,\tau)$ in the category $\Bemb$ appearing in Proposition \ref{prop:uniqueness of the Bochner integral} is in fact a morphism in the category $\B$.
            The additional naturality of $\int_{-}$ with respect to morphisms of the type $\left( m^{\mathrm{op}},T \right) \colon (X,V) \rightarrow (Y,W)$ in the category $\opMeas \times \Ban$, where $m \colon Y \rightarrow X$ is a measure-preserving map, corresponds, for any $f \in L^1(X,V)$, to the formula for integration under a change of variables:
            \begin{align}
                \left( \int_{(Y,W)} \circ ~ L^1_{\left( m^{\mathrm{op}},T \right)} \right) (f) &= \int_Y L^1_{\left( m^{\mathrm{op}},T \right)} f d\mu_Y \notag \\
                &= \int_Y T \circ f \circ m d\mu_Y \label{eq:use the action of Sp on measure-preserving maps} \\
                &= \int_X T \circ f d\mu_X \notag \\
                &= T \left( \int_X f d\mu_X \right) \notag \\
                &= \left( \left( \proj \right)_{\left( m^{\mathrm{op}},T \right)} \circ \int_{(X,V)} \right) (f), \notag
            \end{align} 
            where, in equality \eqref{eq:use the action of Sp on measure-preserving maps}, we used formula \eqref{eq:action of Sp on measure-preserving maps}.
        \end{remark}

\section{Conclusions and outlook}
\label{sec:conclusions and outlook}

    The framework we have developed to obtain a universal characterisation of the Banach spaces of equivalence classes of $L^p$ Bochner integrable functions, under equality almost everywhere, represents a broader formulation of the framework developed in \cite{Leinster2023} to characterise the Banach spaces of equivalence classes of $\K$-valued Lebesgue integrable functions, under equality almost everywhere. \\
    However, there are even more general theories of integration (such as, for example, the Pettis integral \cite{Talagrand1984}), which are based on the concept of duality and are applicable not only to the spaces of strongly measurable functions, but also to the spaces of weakly measurable functions. 
    Although these theories lie beyond the scope of this work, we believe that the proposed approach may serve as a valid starting point for characterising these more general theories of integration as well. \\
    As explained in the introduction, our interest in the Bochner integral is motivated primarily by the study of the signature of paths taking values in vector spaces. 
    Therefore, our plan for future research is to apply the results obtained so far to the characterisation of the iterated integrals of paths taking values in vector spaces.

\section*{Acknowledgements}

    We would like to express our gratitude to Sylvie Paycha for first proposing the idea for this project and for her constant support. 
    We would like to thank Tom Leinster and Darrick Lee for providing us with inspiration through their work, for stimulating conversations and for their insightful comments. 
    Thanks also to Jonathan Taylor for kindly sharing his technical expertise in category theory on various occasions over a cup of coffee. \\
    F.~M.~gratefully acknowledges funding by the Deutsche Forschungsgemeinschaft (DFG, German Research Foundation) -- CRC/TRR 388 ``Rough Analysis, Stochastic Dynamics and Related Fields'' -- Project ID 516748464.

\section*{AI disclosure}
    
    Parts of this manuscript were prepared with the assistance of artificial intelligence tools. 
    The authors retain full responsibility for the content, accuracy, and mathematical correctness of the manuscript.

\printbibliography

\end{document}